\title[Discrete Stein--Wainger II]
{Discrete analogues of maximally modulated singular integrals of Stein-Wainger type: $\ell^p$ bounds for $p>1$}
\author[B. Krause]{Ben Krause}
\address{BK: Department of Mathematics, King's College London, WC2R 2LS, UK}
\email{ben.krause@kcl.ac.uk}
\author[J. Roos]{Joris Roos}
\address{JR: Department of Mathematical Sciences, University of Massachusetts Lowell, USA\\
\& School of Mathematics, The University of Edinburgh, Scotland, UK}
\email{joris\_roos@uml.edu}
\date{September 23, 2022}
\subjclass[2010]{42B15, 42B20, 42B25}
\def\R{\mathbb{R}}
\def\N{\mathbb{N}}
\def\C{\mathbb{C}}
\def\Z{\mathbb{Z}}
\def\Q{\mathbb{Q}}
\theoremstyle{plain}
\newtheorem{mthm}{Theorem}
\newtheorem{mcor}[mthm]{Corollary}
\newtheorem{thm}{Theorem}[section]
\newtheorem{prop}[thm]{Proposition}
\newtheorem{lem}[thm]{Lemma}
\theoremstyle{remark}
\newtheorem*{remark}{Remark}
\numberwithin{equation}{section}
\newcommand{\D}{\mathrm{D}}
\begin{document}

\begin{abstract}
It is proved that certain discrete analogues of maximally modulated singular integrals
of Stein-Wainger type 
are bounded on $\ell^p(\Z^n)$
for all $p\in (1,\infty)$.
This extends earlier work of the authors concerning the case $p=2$. Some open problems for further investigation are briefly discussed.
\end{abstract}

\maketitle

\section{Introduction}
Let $n, d$ be positive integers and $K$ a Calder\'on--Zygmund kernel in $\R^n$. Consider the maximal operator
\begin{equation}\label{eqn:C-def}
\mathscr{C} f(x) = \sup_{\lambda\in\R^n}\Big|\sum_{y\in\Z^n\setminus\{0\}} f(x-y) e(\lambda |y|^{2d}) K(y)\Big|,\quad (x\in\Z^n),
\end{equation}
where $e(x)=e^{2\pi i x}$. This is a discrete analogue of a well-known maximal operator considered by Stein and Wainger \cite{SW01}. Our main result concerns $\ell^p$ bounds for this operator.

\begin{mthm}\label{thm:main} Let $p\in (1,\infty)$. Then there is a constant $C\in (0,\infty)$ such that
\begin{equation}\label{eqn:main}
\|\mathscr{C}f\|_{\ell^p(\Z^n)} \le C \|f\|_{\ell^p(\Z^n)}.
\end{equation}
The constant $C$ only depends on $p,d,n$ and $K$.
\end{mthm}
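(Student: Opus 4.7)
The plan is to leverage the $\ell^2$ bound from Part I and extend to all $p \in (1,\infty)$ through a circle-method decomposition of the multiplier, combined with the Ionescu--Wainger multiplier theorem and transference from the continuous Stein--Wainger operator. I would begin by linearizing the supremum: replace $\sup_\lambda$ by evaluation at a measurable selector $\lambda(x)$, turning $\mathscr{C}$ into a linear modulated singular integral. Next, decompose $K$ dyadically as $K = \sum_{k \ge 0} K_k$ with $K_k$ supported on $\{|y|\sim 2^k\}$, reducing matters to summable-in-$k$ control of each scale. The symbol at scale $k$ then carries the Stein--Wainger phase $\lambda|y|^{2d}$ localized to $|y|\sim 2^k$, which after rescaling is amenable to number-theoretic analysis.

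For the $k$th scale, I would run a major/minor arcs decomposition on $\lambda$: on the major arcs, write $\lambda = a/q + \beta$ with $q \le Q(k)$ growing slowly in $k$ and $\beta$ in a small box. A standard Gauss sum reduction factorizes $e(\lambda|y|^{2d})$, up to an acceptable error, as a normalized Gauss sum depending on $y\bmod q$ times $e(\beta|y|^{2d})$. This yields an approximate product structure: an arithmetic multiplier assembled from Gauss sums, composed with a continuous Stein--Wainger piece in $\beta$. The arithmetic factor is controlled on $\ell^p(\Z^n)$ for all $p\in(1,\infty)$ via the Ionescu--Wainger multiplier theorem, with only logarithmic loss in $q$; the continuous factor, which is still a maximal operator in $\beta$, is dispatched by Magyar--Stein--Wainger-type sampling together with the $L^p(\R^n)$ boundedness of the continuous Stein--Wainger operator \cite{SW01}.

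On the minor arcs, Weyl-type exponential sum estimates sharpen the Part I $\ell^2$ bound into decay of the form $2^{-k\delta}$ per scale; interpolation with a trivial $\ell^p$ estimate (obtained by placing absolute values inside $K_k$) then yields summable bounds across $k$. The main obstacle will be the major-arcs analysis: because $\sup_\lambda$ couples distinct rational modulations, a single application of Ionescu--Wainger to one multiplier is insufficient, and one must instead run a maximal (or square-function) variant of the Ionescu--Wainger estimate uniformly over all rationals of denominator $\le Q(k)$, with careful control of the interaction between arithmetic almost-orthogonality in $q$ and the continuous Stein--Wainger maximal bound in $\beta$. A further delicate point is to ensure that the polylogarithmic losses in $q$ inherent to the Ionescu--Wainger framework remain compatible with the full range $p$ near $1$, which will force the major/minor cutoff $Q(k)$ and the minor-arcs decay exponent $\delta$ to be chosen jointly and carefully.
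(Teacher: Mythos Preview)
Your plan follows the paper's own strategy: dyadic decomposition of $K$, a major/minor arcs split in $\lambda$, Weyl estimates plus interpolation on the minor arcs, and Ionescu--Wainger plus the continuous Stein--Wainger theorem on the major arcs. You also correctly flag the crux, that $\sup_\lambda$ couples distinct rational centers $\alpha$, so a single application of Ionescu--Wainger is not enough.

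Two points in your outline are genuine gaps. First, the continuous Stein--Wainger bound does \emph{not} give summable-in-$k$ decay per spatial scale on the major arcs; it only gives boundedness. The paper therefore does not seek scale-by-scale summability as you propose. Instead it reorganizes: for each fixed \emph{denominator} scale $s$ (so $q\in[2^{s-1},2^s)$) it sums the major-arc pieces over all spatial scales $j$ with $2^s\le j^M$ and proves $2^{-\gamma_p s}$ decay in $s$. This in turn requires a further split by the size of $|\lambda-\alpha|\,2^{2dj}$: the large-oscillation part uses Stein--Wainger decay, the small-oscillation part becomes a maximally truncated singular integral (handled via the Rademacher--Menshov-type inequality of Lemma~\ref{lem:rmineq} combined with Ionescu--Wainger), and the intermediate part imports the $\ell^2$ bound from Part~I and interpolates with a trivial $\ell^p$ bound. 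Your phrase ``summable-in-$k$ control of each scale'' does not yet account for this structure and would stall on the low and intermediate oscillation regimes.

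Second, the resolution of the obstacle you anticipate is a specific factorization, $\mathscr{L}_{s,\alpha}[m]=\mathscr{L}_{s,\alpha}[1]\cdot\mathscr{L}^\sharp_s[m]$, where only the first factor depends on $\alpha$ and the second is an Ionescu--Wainger multiplier independent of $\alpha$. The kernel of $\mathscr{L}_{s,\alpha}[1]$ is computed explicitly to be $e(\alpha|y|^{2d})$ times a fixed $\ell^1$-normalized bump; this gives a trivial $\ell^p$ bound uniform in $\alpha$ (the modulation disappears under absolute values), and interpolating with the Gauss-sum-driven $\ell^2$ decay from Part~I yields a vector-valued maximal-in-$\alpha$ estimate with gain $2^{-\gamma_p s}$. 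The supremum over $\alpha$ is thus peeled off \emph{before} Ionescu--Wainger is applied to $\mathscr{L}^\sharp_s$, rather than by a maximal variant of Ionescu--Wainger itself. A minor correction: to make the frequency localization compatible with Proposition~\ref{prop:iw}, the paper takes the major-arc cutoff $Q(j)=j^M$ rather than $2^{j\varepsilon}$; correspondingly the minor-arc decay (Proposition~\ref{prop:tts1}, via Proposition~\ref{prop:expsumlog}) is only $j^{-\kappa}$, not $2^{-j\delta}$, though this is still summable. The polylogarithmic Ionescu--Wainger losses you worry about are not the bottleneck, and in Tao's version they are absent.
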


The case $p=2$ was proved in our previous paper \cite{KR}, building on earlier work by Krause and Lacey \cite{KL17}.
Here we shall heavily rely on arguments introduced in \cite{KR} and we recommend that the two papers be read side by side,
though all necessary preliminaries will be repeated here.

\subsection{Historical remarks and related problems}
The study of discrete analogues in harmonic analysis dates back to the work of Bourgain, who developed a theory of discrete polynomial maximal functions in the course of his breakthrough work on pointwise ergodic theorems, \cite{B0}, \cite{B2}, \cite{Bou89}. 
Subsequently Stein and Wainger \cite{SW9}, \cite{SW99} 
became interested in discrete analogues of singular integrals for their own sake.

Bourgain proved the following estimate.
Suppose that $P$ is a polynomial with integer coefficients, and $p\in (1,\infty]$. Then there exists a constant $C\in (0,\infty)$ so that
\begin{align*}
\| \sup_{N\ge 1} \frac{1}{N} \sum_{1\le y \leq N} |f(x-P(y))| \|_{\ell^p(\mathbb{Z})} \leq C \| f\|_{\ell^p(\mathbb{Z})}.
\end{align*}
Stein and Wainger \cite{SW9}, \cite{SW99} started investigating singular integral analogues such as 
\begin{align*}
    f \longmapsto \sum_{y\in\Z\setminus\{0\}} \frac{f(x-P(y))}{y},
\end{align*}
and higher-dimensional versions such as
\begin{align}\label{eqn:parabola}
    f\longmapsto \sum_{y\in\Z\setminus\{0\}} \frac{f(x_1-y,x_2-y^2)}{y},
\end{align}
the latter being a discrete analogue of a prototypical singular Radon transform.
Singular Radon transforms have been studied extensively in real harmonic analysis (see \cite{CNSW} and references contained therein).
The study of these discrete analogues was not motivated by ergodic-theoretic considerations, but by intrinsic interest in these operators.
Stein and Wainger \cite{SW99} proved $\ell^p$ estimates for $p\in (3/2,3)$ for a large class of polynomial Radon transforms such as these,
while estimates in the full range $p\in (1,\infty)$ where later established by Ionescu and Wainger \cite{IW06}.
This theory was extended and significantly refined by Mirek, Stein and Trojan \cite{MST15a}, \cite{MST15b} (also see references therein).
In view of these developments and Stein and Wainger's work on oscillatory integrals related to Carleson's theorem \cite{SW01}, it was then natural to ask for $\ell^p$
bounds for the maximal operator
\[ f \longmapsto \sup_{\lambda\in\R}\Big|\sum_{y\in\Z\setminus\{0\}} f(x-y) \frac{e(\lambda y^{2})}{y}\Big|, \]
which is our operator $\mathscr{C}$ if $n=d=1$ (the question was posed by Lillian Pierce at an AIM workshop in 2015).
We now describe some related problems that are still open.

A first consequence of our theorem concerns a variable-coefficient variant of \eqref{eqn:parabola}, given by
\begin{align}
    \mathcal{H}_v f(x)=\sum_{y\in\Z\setminus\{0\}} \frac{f(x_1-y,x_2-v(x)y^2)}{y},
\end{align}
where $v:\Z^2\to\Z$ is an arbitrary function. 
This is a discrete analogue of a real-variable operator 
studied by Guo, Hickman, Lie and one of the authors \cite{GHLR}.

By taking a partial Fourier transform in the $x_2$ variable and applying Theorem \ref{thm:main} with $p=2$,
the following result is obtained.
\begin{mcor}
There exists a constant $C\in (0,\infty)$ so that
for all $v:\Z^2\to \Z$ satisfying $v(x_1,x_2)=v(x_1,0)$ for all $(x_1,x_2)\in\Z^2$,
\begin{align*}
\| \mathcal{H}_v f \|_{\ell^2(\mathbb{Z}^2)} \leq C \|f\|_{\ell^2(\mathbb{Z}^2)}.
\end{align*}
\end{mcor}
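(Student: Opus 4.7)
The plan is a direct Plancherel reduction in the $x_2$ variable. By hypothesis $v(x_1,x_2)=v(x_1,0)$, so I will abuse notation and write $v(x_1)$ instead of $v(x_1,0)$. The operator $\mathcal{H}_v$ is a convolution in $x_2$ at each fixed $x_1$, so it diagonalizes under the partial Fourier transform in the $x_2$ direction.

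First I would take the partial Fourier transform in $x_2$, writing $\tilde f(x_1,\xi_2)=\sum_{x_2\in\Z} f(x_1,x_2)\,e(-\xi_2 x_2)$ for $\xi_2\in\T$. A change of variables $x_2\mapsto x_2-v(x_1)y^2$ yields
\begin{equation*}
\widetilde{\mathcal{H}_v f}(x_1,\xi_2)
= \sum_{y\in\Z\setminus\{0\}} \frac{e(-\xi_2 v(x_1) y^2)}{y}\, \tilde f(x_1-y,\xi_2).
\end{equation*}
The crucial point is that, with $\xi_2$ frozen, the quantity $\lambda(x_1):=-\xi_2 v(x_1)$ is just a real number depending on $x_1$, and the expression inside the $y$-sum is exactly one of the modulated singular sums appearing in the definition \eqref{eqn:C-def} of $\mathscr{C}$, in the case $n=d=1$ with $K(y)=1/y$.

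Second, I would pointwise dominate by the maximal operator. Since $\lambda(x_1)\in\R$, taking supremum over all choices gives
\begin{equation*}
\bigl|\widetilde{\mathcal{H}_v f}(x_1,\xi_2)\bigr| \le \mathscr{C}\bigl[\tilde f(\cdot,\xi_2)\bigr](x_1).
\end{equation*}
Applying Theorem \ref{thm:main} with $p=2$ in the $x_1$-variable, for each fixed $\xi_2$,
\begin{equation*}
\bigl\|\widetilde{\mathcal{H}_v f}(\cdot,\xi_2)\bigr\|_{\ell^2_{x_1}(\Z)} \le C\,\bigl\|\tilde f(\cdot,\xi_2)\bigr\|_{\ell^2_{x_1}(\Z)}.
\end{equation*}

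Finally, integrating in $\xi_2\in\T$ and invoking Plancherel's theorem in the $x_2$-variable on both sides gives the claimed bound $\|\mathcal{H}_v f\|_{\ell^2(\Z^2)}\le C\|f\|_{\ell^2(\Z^2)}$. There is no real obstacle here: the assumption that $v$ depends only on $x_1$ is exactly what makes the multiplier picture in $\xi_2$ valid, and the maximal nature of $\mathscr{C}$ (supremum over $\lambda\in\R$) is precisely what absorbs the dependence of $\lambda(x_1)=-\xi_2 v(x_1)$ on the spatial variable $x_1$.
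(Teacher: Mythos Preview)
Your argument is correct and matches the paper's own justification exactly: the paper states that the corollary follows ``by taking a partial Fourier transform in the $x_2$ variable and applying Theorem~\ref{thm:main} with $p=2$,'' which is precisely the Plancherel reduction you carried out.
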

It would be interesting to prove such estimates for $p\not=2$. Discrete analogues of maximal functions associated with variable curves, such as
\begin{align}
    f\longmapsto \sup_{N\ge 1} \frac1{N}\sum_{1\le y\le N} |f(x_1-y,x_2-v(x)y^2)|.
\end{align}
are also of interest.

In view of Lie's quadratic Carleson theorem \cite{Lie}, a long term goal is to study 
modulation invariant discrete analogues such as
\begin{equation}\label{eqn:superhard}
f\longmapsto \sup_{\lambda,\mu\in\R} \Big| \sum_{y\in\Z\setminus\{0\}} f(x-y) \frac{e(\lambda y+\mu y^2)}{y} \Big|.
\end{equation}
If the supremum is restricted to one of the variables $\lambda$ or $\mu$, then $\ell^p$ bounds are known, though they are obtained
by entirely different methods: if $\lambda$ is dropped, we are in the situation of the present paper; if $\mu$ is dropped, then 
familiar linear Carleson theory can be applied (see e.g. Lacey-Thiele \cite{LT00}).
It is not clear how the two methods could be combined.

An interesting model problem is to determine whether the operator
\[ f\longmapsto \sup_{\lambda\in\R} \Big| \sum_{y\in\Z\setminus\{0\}} f(x-y^3) \frac{e(\lambda y^3)}{y} \Big| \]
is bounded on $\ell^2(\Z)$, as this is a prototypical example of an operator that combines modulation invariance and Radon behavior. 
A further obstacle in \eqref{eqn:superhard} is the multi-parameter setting. This is an appealing challenge even in non-modulation invariant cases.
We hope to return to these questions in the future.

\subsection*{Acknowledgements}
J.R. would like to thank the Hausdorff Research Institute for Mathematics in Bonn
and the organizers of the trimester program ``Harmonic Analysis and Analytic Number Theory''
for a pleasant work environment.

\section{Preliminaries}\label{sec:prelim}

\subsection{Notation}

Given a bounded function $m$ on $\R^n$ we denote the Fourier multiplier on $\R^n$
associated with $m$ by $m(D)$. If $m$ is $1$-periodic, $m(D)$ may also stand for the Fourier multiplier on $\Z^n$. It will
always be clear from context which one is meant.

Given integers (or vectors of integers) $a_1,\dots,a_k$, we denote their greatest common divisor by $(a_1,\dots,a_k)$
and for a positive integer $q$ we denote the set of non-negative integers smaller than $q$ by $[q]$.

For non-negative quantities $A$ and $B$, we use the notation $A\lesssim B$ to indicate existence of a constant $C$ so that $A\le C\cdot B$,
where $C$ may depend on various parameters, which may sometimes be indicated by subscripts. 
Similarly, $A\approx B$ means that $A\lesssim B$ and $B\lesssim A$, wheras $A\asymp B$ means that $2^{-1}B\le A\le 2B$.
All constants throughout are allowed to depend on $d$ and $n$.

\subsection{Exponential sum estimates}

Let $D$ be a positive integer. For a set of coefficients $\xi=(\xi_\alpha)_{1\le |\alpha|\le D}$ (with $\alpha\in\N^n_0$ a multiindex),
define the polynomial
\[ P(\xi; x) = \sum_{1\le |\alpha|\le D} \xi_\alpha x^\alpha. \]
Further let $R\ge 2$ and fix a smooth function $\varphi$ on $\R^d$ such that 
\[ |\varphi(x)|\le 1,\quad |\nabla \varphi(x)|\le (1+|x|)^{-1} \]
holds for all $x\in\R^n$. Finally, let $c_0>0$ and let $\omega$ be a convex set contained in the ball of radius $c_0 R$ centered at the origin.
We are interested in the exponential sum
\[ S_{R} = \sum_{x\in\Z^n\cap\omega} e(P(\xi; x)) \varphi(x). \]
The triangle inequality implies that $|S_{P,R,\varphi,\omega}| \lesssim_{c_0,n} R^n$.
Heuristically, we can expect an improvement to this trivial estimate if at least one of the coefficients $\xi_\alpha$ is in some sense badly approximated by rationals with small denominators.
We shall require two quantitatively distinct manifestations of this principle.
The first is due to Stein and Wainger \cite[Proposition 3]{SW99}.
\begin{prop}\label{prop:expsumpower}
For every $\varepsilon>0$ there exists $\delta=\delta(\varepsilon,n,D)>0$ such that if $\alpha_0$ with $1\le |\alpha_0|\le D$ is such that
\[ |\xi_{\alpha_0}-\tfrac{a}q| \le \tfrac1{q^2},\quad R^{\varepsilon}\le q\le R^{|\alpha_0|-\varepsilon} \]
for a reduced rational $a/q$, then
\[ |S_R| \le C R^{n-\delta}, \]
where the constant $C$ depends only on $n, D, c_0, \varepsilon$.
\end{prop}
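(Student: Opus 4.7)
The strategy is the classical Weyl method: iterated differencing of the phase to reduce to a linear exponential sum, followed by a Diophantine counting argument.

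First I would remove the amplitude $\varphi$. Using summation by parts and the gradient estimate $|\nabla\varphi(x)|\le (1+|x|)^{-1}$, the sum $S_R$ reduces (up to a logarithmic factor, which is harmless since we seek only a power saving) to estimating pure exponential sums $T(\omega') = \sum_{x\in \Z^n\cap\omega'} e(P(\xi;x))$ over convex subsets $\omega'\subset \omega$. It is enough to show $|T(\omega')|\lesssim R^{n-\delta'}$ for any such $\omega'$ with a uniform $\delta'>0$.

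Next, set $k=|\alpha_0|-1$ and apply $k$ rounds of Weyl differencing to $T=T(\omega')$. Each squaring yields
\[ |T|^2 \le R^n \sum_{h\in\Z^n,\, |h|\lesssim R} \Big|\sum_{x\in \omega'\cap(\omega'-h)} e(\Delta_h P(\xi;x))\Big|, \]
where $\Delta_h P(\xi;x)=P(\xi;x+h)-P(\xi;x)$ has degree strictly less than $P$. The key point is to choose the differencing directions so that after $k$ steps the contribution of the monomial $\xi_{\alpha_0}x^{\alpha_0}$ becomes a \emph{linear} phase in a single remaining coordinate $x_{i^*}$ (where $i^*$ is any index with $\alpha_{0,i^*}>0$), with slope proportional to
\[ \beta_H := \alpha_0!\,\xi_{\alpha_0}\,H + (\text{terms independent of }x_{i^*}),\qquad H=\prod_{s=1}^{k} h^{(s)}_{\iota(s)}, \]
the $h^{(s)}$ being the $k$ differencing increments and $\iota(s)$ the chosen coordinate direction at step $s$. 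After Cauchy-Schwarz and bounding the innermost linear sum by $\min(R,\|\beta_H\|^{-1})$, one obtains
\[ |T|^{2^k} \lesssim R^{(n-1)(2^k-1)}R^{nk}\, \sum_{h^{(1)},\dots,h^{(k)}\in\Z^n\cap [-CR,CR]^n} \min\bigl(R,\|\beta_H\|^{-1}\bigr). \]

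The final step is a Diophantine count. The hypothesis $|\xi_{\alpha_0}-a/q|\le q^{-2}$ with $(a,q)=1$ and $R^\varepsilon\le q\le R^{|\alpha_0|-\varepsilon}$ controls the distribution modulo $1$ of $\alpha_0! \xi_{\alpha_0}H$ as $H$ ranges over products of integers of size $\lesssim R$: a standard argument (using Dirichlet's theorem and the divisor bound $\tau(m)\lesssim_\eta m^\eta$ to pass from $H$ to its factors) gives
\[ \#\{h^{(s)}:\|\beta_H\|\le R^{-1}\} \lesssim_\eta R^{nk+\eta}\bigl(q^{-1}+R^{-1}+qR^{-|\alpha_0|}\bigr). \]
Both $q^{-1}$ and $qR^{-|\alpha_0|}$ are $\le R^{-\varepsilon}$ by the hypothesis on $q$. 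Combining with the trivial bound on the remaining $h$'s and taking $2^k$-th roots produces $|T|\lesssim R^{n-\delta}$ for some $\delta=\delta(\varepsilon,n,D)>0$.

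The main obstacle is the multivariate bookkeeping in the differencing step: one must choose the sequence of directions $\iota(s)$ so that, after $k$ differences, $\xi_{\alpha_0}x^{\alpha_0}$ produces a \emph{single} linear phase in one variable with coefficient a nonzero monomial in the $h^{(s)}$'s, while simultaneously ensuring that the error terms of lower degree coming from the other monomials of $P$ can be absorbed into the ``constant'' part of $\beta_H$. Once this combinatorial setup is right, the divisor-bound count for $H$ is the technical heart of the estimate, and the range $R^\varepsilon\le q\le R^{|\alpha_0|-\varepsilon}$ is precisely what is needed to beat both the $q^{-1}$ and $qR^{-|\alpha_0|}$ terms in the Diophantine count.
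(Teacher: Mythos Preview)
The paper does not supply its own proof of this proposition; it is quoted as a black box from Stein and Wainger \cite[Proposition 3]{SW99}. Your outline via Weyl differencing is exactly the classical route and is, in essence, what Stein and Wainger do there, so the high-level strategy is correct and matches the cited source.

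One point in your sketch deserves more care. You write
\[
\beta_H = \alpha_0!\,\xi_{\alpha_0}\,H + (\text{terms independent of } x_{i^*}),
\]
but after $k$ differencings the linear coefficient in $x_{i^*}$ receives contributions not only from the monomial $\xi_{\alpha_0}x^{\alpha_0}$ but also from every other monomial $\xi_\alpha x^\alpha$ with $\alpha_{i^*}\ge 1$; these extra terms are polynomials in the $h^{(s)}$ with coefficients depending on the remaining $\xi_\alpha$. They are not ``independent of $x_{i^*}$'' --- they are part of $\beta_H$ --- and they can shift the residue of $\beta_H$ modulo $1$ in a way that depends on all the $h^{(s)}$, not just on the product $H$. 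The standard fix is either to (i) first freeze all coordinates of $x$ except $x_{i^*}$, obtaining a one-variable polynomial whose leading coefficient in $x_{i^*}$ is $\xi_{\alpha_0}$ times a monomial in the frozen variables plus lower-order contributions, and then run one-dimensional Weyl in $x_{i^*}$; or (ii) observe that for each fixed tuple $(h^{(1)},\dots,h^{(k)})$ the extra contribution to $\beta_H$ is a fixed real number, so the Diophantine count $\#\{H:\|\alpha_0!\xi_{\alpha_0}H+\theta\|\le R^{-1}\}$ must be carried out uniformly in the shift $\theta$, which the standard lattice-point argument indeed gives. Either way the conclusion stands, but the sketch as written elides this.
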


The following refinement is due to Mirek, Stein and Trojan \cite[Theorem 3.1]{MST15a}.

\begin{prop}\label{prop:expsumlog}
For every $\gamma>0$ there exists $\delta=\delta(\gamma,n,D)>0$ such that if $\alpha_0$ with $1\le |\alpha_0|\le D$ is such that
\[ |\xi_{\alpha_0}-\tfrac{a}q| \le \tfrac1{q^2},\quad (\log R)^{\delta}\le q\le R^{|\alpha_0|} (\log R)^{-\delta} \]
for a reduced rational $a/q$, then
\[ |S_R| \le C R^n (\log R)^{-\gamma}, \]
where the constant $C$ depends only on $n, D, c_0, \gamma$.
\end{prop}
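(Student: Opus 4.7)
The plan is to combine classical Weyl differencing with a refined Dirichlet-type lattice-point count that delivers logarithmic (rather than polynomial) savings across the extended range of denominators, following the strategy of Mirek--Stein--Trojan. First, I would dispense with the smooth cutoff: summation by parts in each coordinate, exploiting the bound $|\nabla \varphi(x)|\le (1+|x|)^{-1}$, reduces the task to bounding unweighted exponential sums
\[ T(\omega') = \sum_{x\in\Z^n\cap\omega'} e(P(\xi;x)) \]
over convex subregions $\omega'\subseteq\omega$, at the cost of only a $\log^{O(1)} R$ factor.

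Next I would apply multi-variable Weyl differencing $|\alpha_0|-1$ times, iterating the van der Corput $A$-process along coordinate directions chosen so as to kill all degrees of the monomial $x^{\alpha_0}$ except one. Summing the resulting linear exponential sum as a geometric series, one obtains an estimate of the form
\[ |T(\omega')|^{2^{|\alpha_0|-1}} \lesssim R^{C_1} \sum_{|h_1|,\dots,|h_{|\alpha_0|-1}|\lesssim R}\, \min\Bigl(R,\, \|c\, \xi_{\alpha_0}\, h_1\cdots h_{|\alpha_0|-1}\|^{-1}\Bigr), \]
for some integer $c\ne 0$ and exponent $C_1 = C_1(n,\alpha_0)$. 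The crucial point is that in the linear coefficient only $\xi_{\alpha_0}$ appears: the contributions of the lower-order $\xi_\alpha$ have become $x$-independent and drop out in modulus.

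The decisive step is the counting. Grouping by the product $k = h_1 \cdots h_{|\alpha_0|-1}$ (absorbing divisor-function multiplicities into $\log^{O(1)} R$ factors) and applying the classical inequality
\[ \sum_{|k|\le H}\min\bigl(A,\, \|c\,\xi_{\alpha_0}\, k\|^{-1}\bigr) \lesssim (1 + H/q)\bigl(A + q\log q\bigr), \]
valid since $|\xi_{\alpha_0}-a/q|\le 1/q^2$ with $(a,q)=1$, one obtains a bound that improves over the trivial estimate by a factor of $(q^{-1} + qR^{-|\alpha_0|})(\log R)^{O(1)}$. Under the hypothesis $(\log R)^\delta \le q \le R^{|\alpha_0|}(\log R)^{-\delta}$, both $q^{-1}$ and $qR^{-|\alpha_0|}$ are at most $(\log R)^{-\delta}$. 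Taking $2^{|\alpha_0|-1}$-th roots and choosing $\delta$ large in terms of $\gamma, n, D$ then yields the claim.

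The main obstacle, which distinguishes this statement from the Stein--Wainger proposition above, is precisely this counting step: to reach $q$ as large as $R^{|\alpha_0|}(\log R)^{-\delta}$ one cannot afford the crude substitute $q\eta + H/q$ of Weyl's inequality but must carry the full logarithmic refinement of the lattice-point count, which ultimately rests on continued-fraction book-keeping for $\xi_{\alpha_0}$. A secondary, more routine technical point is the multi-variable book-keeping in the differencing: one must choose the shift directions consistently with $\alpha_0$ so that after $|\alpha_0|-1$ iterations the linear phase genuinely involves $\xi_{\alpha_0}$ rather than the other coefficients.
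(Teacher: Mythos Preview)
The paper does not prove this proposition; it is quoted verbatim as \cite[Theorem 3.1]{MST15a} and used as a black box. There is therefore no ``paper's own proof'' to compare against. Your sketch is a faithful high-level outline of the Mirek--Stein--Trojan argument you cite, and you have correctly isolated the point that distinguishes it from Proposition~\ref{prop:expsumpower}: the refined lattice-point count $\sum_{|k|\le H}\min(A,\|c\,\xi_{\alpha_0}k\|^{-1})\lesssim (1+H/q)(A+q\log q)$, which retains the logarithmic gain across the full range $(\log R)^\delta\le q\le R^{|\alpha_0|}(\log R)^{-\delta}$ rather than only the power-scale range $R^\varepsilon\le q\le R^{|\alpha_0|-\varepsilon}$.

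Two small technical cautions if you were to write this out in full. First, in the multi-index setting the differencing parameters $h_1,\dots,h_{|\alpha_0|-1}$ are shifts in specific coordinate directions (not arbitrary vectors), and the product $h_1\cdots h_{|\alpha_0|-1}$ appearing in the linear phase is really a product of scalar shift lengths; the divisor-function bookkeeping when collapsing to a single sum over $k$ then goes through as you indicate. Second, the integer $c$ multiplying $\xi_{\alpha_0}$ (coming from multinomial coefficients in the iterated differencing) means the rational approximation $a/q$ must be adjusted by a bounded factor, which can change $q$ by at most a factor depending only on $D$; this is harmless for the logarithmic hypothesis but should be tracked.
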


\subsection{Ionescu-Wainger theory}

Call a subset $\Theta \subset \R^n$ {\em periodic} if $x+\Theta=\Theta$ for all $x\in\Z^n$.
Given a bounded function $m$ on $\R^n$ and a rational periodic frequency set $\Theta\subset\Q^n$, consider the periodic multi-frequency multiplier
\[  \Delta_\Theta[m](\xi) = \sum_{\theta\in\Theta} m(\xi-\theta).  \]
The idea is roughly that if $\Theta$ has appropriate arithmetic properties and $m$ has sufficiently small support, then the $L^p$ theories of $\Delta_\Theta[m]$ (as a multiplier on $\Z^n$) and $m$ (as a multiplier on $\R^n$) are closely related.
Define 
\[ \mathscr{B}_N = \{\tfrac{b}q\in\Q^n\,:\,(b,q)=1,\,1\le q\le N\}  \]
and let $\eta$ be a smooth function on $\R^n$ supported on $\{|\xi|\le 1\}$ and equal to one on $\{|\xi|\le 1/2\}$. Denote $\eta_\lambda(\xi)=\eta(\lambda^{-1}\xi)$.

\begin{prop}\label{prop:iw}
Let $p\in (1,\infty)$ and $r$ a positive integer with $(2r)'\le p\le 2r$.
Further let $m$ be a bounded function on $\R^n$.
Assume that $A>0$ is such that for all $f\in L^{2r}(\R^n)$,
\[ \| m(D) f\|_{L^{2r}(\R^n)} \le A \|f\|_{L^{2r}(\R^n)}. \]
Let $\rho>0$ and $N\ge 2^{\lfloor \frac{2}\rho\rfloor+1}$. Then there exists 
a periodic set $\mathscr{U}_N\subset\Q^n$ such that
\[ \mathscr{B}_N \subset \mathscr{U}_N \subset \mathscr{B}_{2^{N^\rho}}  \]
and for all $f\in \ell^p(\Z^n)$,
\[ \Big\| \Delta_{\mathscr{U}_N} [m_\nu \eta_{2^{-N^{2\rho}}}](D) f\Big\|_{\ell^p(\Z^n)} \lesssim_{\rho,p,r} A \|f\|_{\ell^{p}(\Z^n)} \]
(In particular, the implicit constant is independent of $N$.)
\end{prop}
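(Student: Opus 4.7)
The plan is to follow the construction of Ionescu and Wainger \cite{IW06} with the refinements of Mirek, Stein and Trojan \cite{MST15a}. First I would build $\mathscr{U}_N$ by a combinatorial sieve. Set $D=\lfloor 2/\rho\rfloor+1$ and partition the primes into disjoint blocks $\mathscr{P}_1,\dots,\mathscr{P}_D$, chosen so that $\mathscr{P}_1$ contains every prime $\le N$ and the subsequent $\mathscr{P}_j$ are dyadic ranges whose largest element is of order $2^{N^\rho/D}$. Define $\mathscr{U}_N$ to be the set of reduced fractions $a/q \pmod{\Z^n}$ with $q$ a squarefree product $q=q_1\cdots q_D$, each $q_j$ built from primes in $\mathscr{P}_j$ and satisfying $q_j\le 2^{N^\rho/D}$. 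The containment $\mathscr{B}_N\subset\mathscr{U}_N$ is immediate, since any $q\le N$ has all prime factors in $\mathscr{P}_1$, and a size count gives $\mathscr{U}_N\subset\mathscr{B}_{2^{N^\rho}}$.

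Second, I would decompose the multi-frequency multiplier along these layers. By the Chinese Remainder Theorem, each $\theta\in\mathscr{U}_N$ admits a unique representation $\theta\equiv \theta_1+\cdots+\theta_D\pmod{\Z^n}$ with $\theta_j$ of denominator $q_j$. Because the bumps $\eta_{2^{-N^{2\rho}}}(\cdot-\theta)$ for distinct $\theta\in\mathscr{U}_N$ are disjointly supported (the spacing $1/q$ within each layer greatly exceeds $2^{-N^{2\rho}}$), the operator $\Delta_{\mathscr{U}_N}[m\,\eta_{2^{-N^{2\rho}}}]$ factorizes essentially as a composition of $D$ single-layer multi-frequency multipliers, up to harmless error terms.

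Third, I would control each single-layer piece via the Magyar--Stein--Wainger sampling principle. Within a single layer the denominators are uniformly bounded and $\eta$ is extremely localized, so the $\ell^p(\Z^n)$ norm of the layer multiplier transfers to the $L^p(\R^n)$ norm of $m(\D)$ at cost $\lesssim A$. Passing from a single frequency to a sum over $\theta$'s inside one layer requires a Rademacher--Menshov square function estimate together with Khintchine's inequality, reducing the multi-frequency sum to a vector-valued single-frequency operator; interpolating between the trivial $\ell^2$ bound (Plancherel) and the $\ell^{2r}$ bound coming from the hypothesis on $m$ then yields the stated range $p\in[(2r)',2r]$ with constant depending only on $\rho,p,r$.

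The main obstacle is ensuring that the total cost of composing the $D$ layers remains bounded by $C(\rho,p,r)$ rather than growing with $N$. This is precisely why the sieve is designed with $D=\lfloor 2/\rho\rfloor+1$ and why the squarefree and coprimality conditions are imposed: each layer is supposed to contribute only a constant factor at each interpolation step, so that iterating over $D=O(1/\rho)$ layers leaves the bound $N$-uniform. The delicate bookkeeping of the arithmetic in the Chinese Remainder decomposition, together with showing that the Rademacher--Menshov argument at each layer does not produce logarithmic losses in $N$ that accumulate across layers, is the technical heart of the argument.
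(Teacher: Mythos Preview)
The paper does not prove this proposition at all: it is quoted as a black box, with a remark attributing it to Ionescu--Wainger \cite{IW06}, the improvement to Mirek \cite{Mir18}, and the removal of the logarithmic loss to Tao \cite{Tao21}. So there is no ``paper's own proof'' to compare against beyond those citations.

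Your sketch is in the right spirit---it is recognisably the Ionescu--Wainger architecture---but a couple of points are off. First, restricting $q$ to \emph{squarefree} products $q_1\cdots q_D$ cannot give $\mathscr{B}_N\subset\mathscr{U}_N$: the set $\mathscr{B}_N$ contains reduced fractions with denominators like $4$ or $9$, so the actual construction partitions prime \emph{powers} (or equivalently allows higher powers within each block), not just primes. Your containment argument ``any $q\le N$ has all prime factors in $\mathscr{P}_1$'' is fine for the prime support but says nothing about the exponents. Second, the statement you are asked to prove has no loss in $N$, and your last paragraph correctly flags this as the crux, but the mechanism you describe (Rademacher--Menshov plus Khintchine at each layer) is essentially the original argument, which \emph{does} incur a $(\log N)^{O_\rho(1)}$ loss; eliminating that loss is genuinely nontrivial and is the content of \cite{Tao21} (via a superorthogonality/adelic reformulation) rather than something that falls out of careful bookkeeping in the layer composition. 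If you are content with a logarithmic loss---which, as the paper's remark notes, would suffice for the application---then your outline, once the squarefree slip is fixed, is a reasonable roadmap to the Mirek version.
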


\begin{remark}
This result was first proved by Ionescu and Wainger \cite{IW06}; it originally featured a logarithmic loss in $N$. This loss was later improved by Mirek \cite{Mir18} and recently Tao \cite{Tao21} showed that no loss in $N$ occurs. (However, this is not crucial for our application.)
\end{remark}

\subsection{A numerical inequality}

The following is a variant of an inequality that proved useful in several recent works such as \cite{MST15a}, \cite{MST15b}, \cite{MT16}.

\begin{lem}\label{lem:rmineq}
Let $r\in [1,\infty), s\in\N$ and $(a_j)_{0\le j\le 2^s}$ a family of complex numbers.
Then for all $0\le j, j_0\le 2^s$,
\[ |a_j| \le |a_{j_0}| + 2^{1/r'} \sum_{l=0}^s \Big( \sum_{\kappa=0}^{2^{s-l}-1} |a_{\kappa 2^l} - a_{(\kappa+1)2^l}|^r \Big)^{1/r}. \]
\end{lem}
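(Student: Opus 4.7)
The plan is to bound $|a_j-a_{j_0}|$ by a telescoping argument through a dyadic multi-resolution, and the content of the lemma is really about getting the sharp constant $2^{1/r'}$ rather than a trivial $2$.

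For each $0\le k\le 2^s$ and $0\le l\le s$, define the dyadic rounding $k^{(l)} = 2^l \lfloor k/2^l \rfloor$, so that $k^{(0)}=k$, $k^{(s)}=0$, and $k^{(l-1)}-k^{(l)}\in\{0,2^{l-1}\}$ for all $l\ge 1$. Let $L$ be the smallest index in $\{1,\dots,s\}$ for which $j^{(L)}=j_0^{(L)}$; this exists because both equal $0$ at $l=s$, and by definition of $L$ one has $j^{(l)}\ne j_0^{(l)}$ whenever $l<L$. Since $j^{(L)}=j_0^{(L)}$, telescoping and the triangle inequality give
\[
|a_j-a_{j_0}|\le \sum_{l=1}^{L}\bigl(b_l+b_l'\bigr),\qquad b_l:=|a_{j^{(l-1)}}-a_{j^{(l)}}|,\ \ b_l':=|a_{j_0^{(l-1)}}-a_{j_0^{(l)}}|.
\]
Each $b_l$ and $b_l'$ is, by construction, either zero or equal to one of the adjacent differences $|a_{\kappa 2^{l-1}}-a_{(\kappa+1)2^{l-1}}|$ appearing in the $\ell^r$-sum at scale $l-1$.

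The crux is showing that $b_l+b_l'$ is controlled by $2^{1/r'}$ times the $\ell^r$-norm at scale $l-1$. When $l<L$, the points $j^{(l)}$ and $j_0^{(l)}$ are distinct multiples of $2^l$, hence differ by at least $2^l>2^{l-1}$, so the adjacent pairs $\{j^{(l)},j^{(l)}+2^{l-1}\}$ and $\{j_0^{(l)},j_0^{(l)}+2^{l-1}\}$ are disjoint; in particular $b_l$ and $b_l'$ are two distinct terms of the $\ell^r$-sum, and Hölder's inequality in the form $x+y\le 2^{1/r'}(x^r+y^r)^{1/r}$ gives
\[
b_l+b_l'\ \le\ 2^{1/r'}\Big(\sum_{\kappa=0}^{2^{s-l+1}-1}|a_{\kappa 2^{l-1}}-a_{(\kappa+1)2^{l-1}}|^r\Big)^{1/r}.
\]
When $l=L$, we have $j^{(L)}=j_0^{(L)}$ but $j^{(L-1)}\ne j_0^{(L-1)}$, so exactly one of $b_L,b_L'$ is zero and the other is a single term of the same $\ell^r$-sum; the bound is then trivially weaker than $2^{1/r'}$ times that norm.

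Summing the resulting estimates over $1\le l\le L$, re-indexing by $l':=l-1$, and extending the range of summation from $0\le l'\le L-1$ up to $0\le l'\le s$ (which only adds non-negative terms) produces exactly the right-hand side of the claim, and combining with $|a_j|\le|a_{j_0}|+|a_j-a_{j_0}|$ finishes the proof. The only subtle point, and the one place care is needed, is the disjointness observation that keeps the constant at $2^{1/r'}$ instead of $2$; everything else is pure bookkeeping on the dyadic tree.
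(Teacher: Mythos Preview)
Your argument is correct and is essentially the same as the paper's: the paper simply invokes the observation (referred to \cite{MT16}) that any interval $[j_0,j_1)$ can be partitioned into dyadic intervals with each dyadic length occurring at most twice, and your dyadic-rounding paths from $j$ and $j_0$ to a common ancestor produce the same key feature---at most two dyadic differences per scale---from which the constant $2^{1/r'}$ follows via the two-term H\"older inequality exactly as you wrote. One small slip to patch: the claim $k^{(s)}=0$ fails when $k=2^s$ (then $k^{(l)}=2^s$ for all $l\le s$), so $L$ need not exist if exactly one of $j,j_0$ equals $2^s$; this endpoint is handled directly by the single $l=s$ term $|a_0-a_{2^s}|$ together with your argument applied to the remaining index, still using at most two terms at each scale.
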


This is an immediate consequence of the observation that for every $j_0\le j_1$, the interval $[j_0,j_1)$
can be partitioned into dyadic intervals in such a way that each dyadic length occurs at most twice.
For the proof of this observation and further details we refer to \cite[Lemma 1]{MT16}. 

\subsection{Multiplier approximations}

We decompose $K(x)=\sum_{j\ge 1} K_j(x)$ so that 
for all $j\ge 1$, $K_j$ is supported in $\{x\,:\,|x|\le 2^{j+1}\}$,
and satisfies the standard estimates
\[ |K_j(x)|\lesssim 2^{-jn},\quad |\nabla K_j(x)|\lesssim 2^{-j(n+1)}. \]
Moreover, let $K_j$ be supported in $\{x\,:\,|x|\ge 2^{j-1}\}$ for $j\ge 2$.
Define associated periodic multipliers by
\[ m_{j,\lambda}(\xi) = \sum_{y\in\Z^n} e(\lambda |y|^{2d} + \xi\cdot y) K_j(y). \]
We recall a basic approximation result for $m_{j,\lambda}(\xi)$ in the spirit of Bourgain \cite{Bou89}.
To  do this, define the exponential sums
\[ S(\tfrac{a}q, \tfrac{b}{q}) = q^{-n} \sum_{r\in [q]^n} e(\tfrac{a}q |r|^{2d}+\tfrac{b}q\cdot r), \]
where $\frac{a}q\in \Q, \frac{b}{q}\in \Q^n$ are rationals with $(a,b,q)=1$
and further the oscillatory integrals,
\[ \Phi_{j,\lambda}(\xi) = \int_{\R^n} e(\lambda |y|^{2d}+\xi\cdot y) K_j(y) dy. \]
The basic result, which was proved in \cite[\S 2]{KR} now reads as follows.

\begin{lem}\label{lem:multapprox}
Let $j$ and $q$ be positive integers so that $q\le 2^{j-2}$, $a\in\Z, b\in\Z^n$ and $(a,b,q)=1$.
Assume that $\lambda\in\R, \xi\in\R^n$ satisfy
\[ |\lambda-\tfrac{a}q|\le \delta 2^{-(2d-1)j},\quad |\xi-\tfrac{b}q|\le \delta \]
with $\delta\in (2^{-j},1)$. Then
\[ m_{j,\lambda}(\xi) = S(\tfrac{a}q,\tfrac{b}q)\Phi_{j,\lambda-\frac{a}q}(\xi-\tfrac{b}q)+O(q\delta) \]
and the implicit constant depends only on $d,n,K$.
\end{lem}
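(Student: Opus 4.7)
The plan is to split the sum over $y\in\Z^n$ into residues modulo $q$ and recognize each inner sum as a Riemann sum for $\Phi_{j,\lambda-a/q}(\xi-b/q)$. Write $y = qz+r$ with $z\in\Z^n$ and $r\in [q]^n$, set $\mu = \lambda-\tfrac{a}{q}$ and $\eta = \xi-\tfrac{b}{q}$, and note that the hypotheses give $|\mu|\le \delta\,2^{-(2d-1)j}$ and $|\eta|\le \delta$. The key arithmetic observation is that the rational part of the phase depends only on $r$: expanding $|qz+r|^{2d} = (|r|^2+q(2z\cdot r+q|z|^2))^d$ by the binomial theorem, every term other than $|r|^{2d}$ carries at least one factor of $q$, so $\tfrac{a}{q}|qz+r|^{2d}\equiv \tfrac{a}{q}|r|^{2d}$ mod $\Z$, and similarly $\tfrac{b}{q}\cdot(qz+r)\equiv \tfrac{b}{q}\cdot r$ mod $\Z$. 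Setting $F(y) = e(\mu|y|^{2d}+\eta\cdot y)K_j(y)$, one obtains
\[
m_{j,\lambda}(\xi) = \sum_{r\in [q]^n}e(\tfrac{a}{q}|r|^{2d}+\tfrac{b}{q}\cdot r)\sum_{z\in\Z^n}F(qz+r).
\]

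Next I would replace each inner sum by its integral. The substitution $y = qw+r$ gives $\int_{\R^n}F(qw+r)\,dw = q^{-n}\Phi_{j,\mu}(\eta)$, independently of $r$. Summing this main term against $\sum_r e(\tfrac{a}{q}|r|^{2d}+\tfrac{b}{q}\cdot r) = q^n S(\tfrac{a}{q},\tfrac{b}{q})$ produces exactly $S(\tfrac{a}{q},\tfrac{b}{q})\Phi_{j,\mu}(\eta)$, the main term claimed in the lemma. The Riemann sum error for each fixed $r$ is estimated by comparing $F(qz+r)$ with the average of $F(qw+r)$ over $w\in z+[-\tfrac12,\tfrac12]^n$ and invoking the mean value theorem:
\[
\Big|\sum_{z\in\Z^n}F(qz+r)-q^{-n}\Phi_{j,\mu}(\eta)\Big|\lesssim N(r)\cdot q\,\|\nabla F\|_\infty,
\]
where $N(r)\lesssim (2^j/q)^n$ counts lattice translates meeting the support $\{|y|\le 2^{j+1}\}$ of $F$.

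The main obstacle is the gradient bound $|\nabla F|\lesssim \delta\,2^{-jn}$ on the support of $F$. The phase gradient is $|\mu|\cdot 2d|y|^{2d-1}+|\eta|\lesssim \delta$ by the hypotheses—indeed the factor $2^{-(2d-1)j}$ in the bound on $|\mu|$ is tuned exactly to cancel the polynomial growth of $|y|^{2d-1}$ on the support of $K_j$—while $|\nabla K_j|\lesssim 2^{-j(n+1)}\le \delta\,2^{-jn}$ uses the standing assumption $\delta > 2^{-j}$. Combining, the per-residue error is $\lesssim q^{1-n}\delta$, and summing in $r\in[q]^n$ against characters of modulus $1$ yields the total error $O(q\delta)$ as claimed.
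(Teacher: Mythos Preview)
Your proof is correct and follows the standard approach: decomposing into residue classes modulo $q$, extracting the complete exponential sum $S(a/q,b/q)$ from the rational part of the phase, and then comparing the inner lattice sum to the integral $\Phi_{j,\mu}(\eta)$ via the mean value theorem. The paper does not give its own proof of this lemma but simply cites \cite[\S 2]{KR}; your argument is essentially the one carried out there.
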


Another key piece of information is that for $(a,b,q)=1$,
\begin{equation}\label{eqn:bcyclic}
(a,q)>1\quad\mathrm{implies}\quad S(\tfrac{a}q,\tfrac{b}q)=0.
\end{equation}
For the proof we refer to \cite[Lemma 2.3]{KR}.

\section{Reduction to major arcs}
This section closely follows the corresponding reductions in our previous paper \cite{KR}, with some adaptations required to facilitate the application of Ionescu-Wainger theory.

We begin with observing the estimate
\begin{equation}\label{eqn:trivunif}
\|\sup_{\lambda\in\R} |m_{j,\lambda}(D)f|\|_{\ell^p(\Z^n)} \lesssim \|f\|_{\ell^p(\Z^n)},
\end{equation} 
valid for all $p\in[1,\infty]$ and $j\ge 0$. This follows from the triangle inequality and Young's convolution inequality. 

The next step is a non-trivial improvement for the case of $\lambda$ away from rationals with small denominators. For a real number $N>1$, define
\[ \mathfrak{A}(N) = \{ \tfrac{a}q\in\Q\,:\,(a,q)=1,\,1\le q\le N\}, \]
\[ X_{j,M} = \bigcup_{\alpha\in \mathfrak{A}_{j^M}} \{\lambda\in\R\,:\,|\lambda-\alpha|\le 2^{-2dj} j^M \}.  \] 

\begin{prop}\label{prop:tts1}For every $p\in (1,\infty)$ and every $\kappa>0$ there exists $M=M(p,\kappa)>0$ large enough so that for all $j\ge 1$,
\[ \| \sup_{\lambda\not\in X_{j,M}} |m_{j,\lambda}(\D)f| \|_{\ell^p(\Z^n)} \lesssim j^{-\kappa} \|f\|_{\ell^p(\Z^n)}. \]
\end{prop}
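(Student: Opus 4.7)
My plan has three stages: derive a uniform pointwise multiplier bound $\sup_\xi |m_{j,\lambda}(\xi)| \lesssim j^{-\gamma}$ for $\lambda \notin X_{j,M}$ via Dirichlet's theorem and Proposition~\ref{prop:expsumlog}; convert this into an $\ell^2$ estimate for the linearized maximal operator through a $TT^*$ analysis in the spirit of \cite{KR}; and finally interpolate against the trivial bound \eqref{eqn:trivunif} to reach the full range $p \in (1,\infty)$.

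\textbf{Step 1: pointwise multiplier bound.} For $\lambda \notin X_{j,M}$, Dirichlet's theorem applied with denominator $Q := 2^{2dj} j^{-M}$ will produce a reduced fraction $a/q$ with $1 \le q \le Q$ and $|\lambda - a/q| \le (qQ)^{-1}$. Were $q \le j^M$, this would force $|\lambda - a/q| \le 2^{-2dj} j^M$ and hence $\lambda \in X_{j,M}$, a contradiction; so $j^M < q \le 2^{2dj} j^{-M}$. Viewing $m_{j,\lambda}(\xi)$ as an exponential sum of the form $S_R$ with $R = 2^j$ and singling out the multi-index $\alpha_0 = (2d, 0, \dots, 0)$ (for which $\xi_{\alpha_0} = \lambda$, since the monomial $y_1^{2d}$ appears in $|y|^{2d}$ with coefficient $1$), the hypotheses of Proposition~\ref{prop:expsumlog} will be satisfied whenever $M \ge \delta(\gamma, n, 2d)$, yielding
\[ \sup_{\xi \in \T^n} |m_{j,\lambda}(\xi)| \lesssim j^{-\gamma}, \qquad \lambda \notin X_{j, M}, \]
with $\gamma$ arbitrarily large by enlarging $M$ correspondingly.

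\textbf{Step 2: $\ell^2$ bound on the supremum.} This is the main step. I would linearize by selecting a measurable $\Lambda \colon \Z^n \to [0, 1) \setminus X_{j,M}$ essentially attaining the pointwise supremum, and study the linear operator $\tilde T f(x) := m_{j, \Lambda(x)}(\D) f(x)$ with kernel $H(x, y) = e(\Lambda(x) |x-y|^{2d}) K_j(x - y)$. The kernel of $\tilde T^* \tilde T$ is
\[ L(z, y) = \sum_{x \in \Z^n} e\bigl(\Lambda(x)\bigl(|x-y|^{2d} - |x-z|^{2d}\bigr)\bigr) K_j(x-y) \overline{K_j(x-z)}. \]
For $y \ne z$ the phase is a polynomial in $x$ of degree $2d-1$; decomposing $\Z^n$ according to the level sets of $\Lambda$ (all falling in the minor-arc regime) and re-invoking Proposition~\ref{prop:expsumlog} on each piece will extract an off-diagonal gain of order $j^{-\gamma}$. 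Combined with the trivial diagonal contribution and Schur's test, this delivers $\|\tilde T\|_{\ell^2 \to \ell^2} \lesssim j^{-\kappa'}$ uniformly in $\Lambda$, hence the $\ell^2$ version of the proposition for any desired $\kappa'$.

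\textbf{Step 3: interpolation.} For each $\Lambda$, the same linear operator $\tilde T$ satisfies $\|\tilde T f\|_{\ell^{p_0}} \le \|\sup_\lambda |m_{j,\lambda}(\D) f|\|_{\ell^{p_0}} \lesssim \|f\|_{\ell^{p_0}}$ for every $p_0 \in [1, \infty]$ by \eqref{eqn:trivunif}. Riesz--Thorin between this and the $\ell^2$ bound from Step~2, with $p_0$ chosen close to $1$ or $\infty$ according to whether $p < 2$ or $p > 2$, yields $\|\tilde T\|_{\ell^p \to \ell^p} \lesssim j^{-\theta(p) \kappa'}$ for some $\theta(p) \in (0,1)$. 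Taking the supremum over $\Lambda$ and choosing $M$ large enough that $\theta(p) \kappa' \ge \kappa$ completes the proof. The principal difficulty lies in Step~2: a naive discretization over a $2^{-2dj}$-net of $\lambda$ would introduce a loss of order $2^{dj}$ that overwhelms the polynomial gain $j^{-\gamma}$, and the bilinear $TT^*$ argument is precisely what one would use to avoid this loss.
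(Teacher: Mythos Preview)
Your overall scheme---linearize, run a $TT^*$ argument for $p=2$, then interpolate against \eqref{eqn:trivunif}---matches the paper's. Step~3 is exactly right, and Step~1 is a correct observation (though the paper does not actually use it). The gap is in Step~2.

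You form $\tilde T^*\tilde T$, whose kernel
\[
L(z,y)=\sum_{x\in\Z^n} e\bigl(\Lambda(x)\,(|x-z|^{2d}-|x-y|^{2d})\bigr)\,K_j(x-z)\,\overline{K_j(x-y)}
\]
has the linearizing function evaluated at the \emph{summation} variable $x$. The phase is therefore not a polynomial in $x$ at all: $\Lambda$ is an arbitrary function. Proposition~\ref{prop:expsumlog} requires a genuine polynomial phase over a convex region, and your proposed fix---passing to level sets of $\Lambda$---does not help, since those sets are arbitrary subsets of $\Z^n$ with no convexity and no control on their number, so there is no cancellation to exploit on them.

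The paper analyzes $\tilde T\tilde T^*$ instead. Its kernel sums over a dummy variable $z$ with $\Lambda(x)$ and $\Lambda(y)$ frozen, so the phase \emph{is} a polynomial in $z$. Even then the argument is more delicate than ``re-invoking Proposition~\ref{prop:expsumlog}'': the degree-$2d$ coefficient equals $\Lambda(x)-\Lambda(y)$, which is uncontrolled. One instead isolates the coefficient of $z_1^{2d-1}$, namely $2d(x_1-y_1)\Lambda(y)$, and combines the minor-arc hypothesis on $\Lambda(y)$ with a slicing/counting argument in $x_1$ to show that the exceptional set of pairs $(x,y)$ where the kernel is large has size $\lesssim 2^{nj}j^{-4\kappa}$; on its complement the exponential sum bound applies, and a Hilbert--Schmidt/Cauchy--Schwarz estimate finishes.
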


By interpolating with \eqref{eqn:trivunif}, it suffices to show the claim for $p=2$. The proof of this is in essence identical with that of \cite[Prop. 3.1]{KR}; we provide details for the necessary changes in \S \ref{sec:TTstar}.
Before we describe the major arc approximation for the remaining part of our operator, let us introduce some convenient notations. For $s\ge 1$ define
\[ \mathcal{A}_s = \{ \tfrac{a}q\in\Q\,:\,(a,q)=1, q\in [2^{s-1}, 2^s) \}. \]
For a reduced rational $\alpha=\frac{a}q\in\mathcal{A}_s$, $\rho>0$, a bounded function $m$ on $\R^n$ and $\xi\in\R^n$, write
\begin{equation}\label{eqn:Lsmdef} \mathscr{L}_{s,\alpha,M}[m](\xi) = 
\sum_{\beta\in \frac1q\Z^n} S(\alpha,\beta)m(\xi-\beta)\chi_{s,M }(\xi-\beta),
\end{equation}
where $\chi_{s,M}(\xi)=\chi(2^{4s 2^{s/(2M)}} \xi)$ with $\chi$ a smooth radial function satisfying $0\le \chi\le 1$ that is equal to one on $\{|\xi|\le 1/4\}$ and supported on $\{|\xi|\le 1/2\}$.
The definition \eqref{eqn:Lsmdef} is motivated by Lemma \ref{lem:multapprox}.
A major obstacle is that the range of frequencies $\beta$ depends on the denominator $q$ of $\alpha$, which is essentially the modulation parameter over which a supremum is taken.
A redeeming quality is that we can let $\beta$ run over a product of cyclic subgroups.
This exploits the relation \eqref{eqn:bcyclic} and is crucial at various points in the argument.
Define
\[ L^s_{j,\lambda,M} = \mathscr{L}_{s,\alpha,M}[\Phi_{j,\lambda-\alpha,M}], \]
where $\alpha$ is the unique $\alpha\in\mathcal{A}_s$ so that $|\lambda-\alpha|\le 2^{-4s 2^{s/(2M)}}$, or an arbitrary element of the complement of $\mathcal{A}_s$ if no such $\alpha$ exists and
\[\Phi_{j,\nu,M} = \Phi_{j,\nu}\cdot \mathbf{1}_{|\nu|\le 2^{-2dj}j^M}. \]
Finally, decompose
\begin{equation}\label{eqn:LEjdef}
m_{j,\lambda}\mathbf{1}_{X_{j,M}} = \sum_{s\ge 1\,:\,2^s\le j^M} L^s_{j,\lambda,M} + E_{j,\lambda,M},
\end{equation}
which we take to be the definition of the error term, $E_{j,\lambda,M}$.

\begin{prop}\label{prop:error}
	For every $p\in (1,\infty)$ there exists $\gamma_p>0$ so that
	for every $M>0$ and every $j\ge 1$,
	\[ \|\sup_{\lambda \in X_{j,M}} |E_{j,\lambda,M}(D)|\|_{\ell^p(\Z^n)} \lesssim 2^{-\gamma_p j} \|f\|_{\ell^2(\Z^n)}. \]
\end{prop}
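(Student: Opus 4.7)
The plan is to first exploit that for $j$ large enough (so that distinct rationals in $\mathfrak{A}(j^M)$ are separated by more than $2 \cdot 2^{-2dj}j^M$), each $\lambda \in X_{j,M}$ determines a unique $\alpha = \alpha(\lambda) \in \mathfrak{A}(j^M)$ within distance $2^{-2dj}j^M$ and a unique scale $s = s(\lambda)$ with $\alpha \in \mathcal{A}_s$. The sum in \eqref{eqn:LEjdef} then collapses to a single $s$-term and
\[
E_{j,\lambda,M}(\xi) = m_{j,\lambda}(\xi) - \mathscr{L}_{s,\alpha,M}[\Phi_{j,\lambda-\alpha}](\xi).
\]
By the triangle inequality over the $\lesssim \log j$ admissible scales, I would reduce matters to bounding, for each fixed $s$,
\[
\bigl\|\sup_{\alpha \in \mathcal{A}_s,\,|\tilde\lambda| \le 2^{-2dj}j^M} |E^{s,\alpha}_{j,\tilde\lambda}(D)f|\bigr\|_{\ell^p(\Z^n)} \lesssim 2^{-\gamma j} \|f\|_{\ell^p(\Z^n)},
\]
where $E^{s,\alpha}_{j,\tilde\lambda}(\xi) := m_{j,\alpha+\tilde\lambda}(\xi) - \mathscr{L}_{s,\alpha,M}[\Phi_{j,\tilde\lambda}](\xi)$.

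Next I would analyze $E^{s,\alpha}_{j,\tilde\lambda}$ pointwise. Write $\delta_0 := 2^{-4s\cdot 2^{s/(2M)}}/2$ for the support radius of $\chi_{s,M}$. If $\xi$ lies in the support of $\chi_{s,M}(\cdot - \beta)$ for some $\beta = b/q$ (with $q$ the denominator of $\alpha$), Lemma \ref{lem:multapprox} applied with $\delta = \max(|\xi-\beta|,\,2^{-j}j^M)$ gives $m_{j,\lambda}(\xi) = S(\alpha,\beta)\Phi_{j,\tilde\lambda}(\xi-\beta)+O(q\delta)$, so
\[
|E^{s,\alpha}_{j,\tilde\lambda}(\xi)| \lesssim q\delta + |\Phi_{j,\tilde\lambda}(\xi-\beta)|\bigl(1-\chi_{s,M}(\xi-\beta)\bigr).
\]
The transition contribution is negligible by the rapid decay $|\Phi_{j,\tilde\lambda}(\eta)| \lesssim_N (2^j|\eta|)^{-N}$ valid for $|\eta| \gtrsim 2^{-j}j^M$, which applies on the transition annulus since $\delta_0 \gg 2^{-j}j^M$. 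Outside all such supports, $m_{j,\lambda}(\xi)$ itself must be bounded directly: $\xi$ is then $\delta_0$-far from every $b/q \in \frac{1}{q}\Z^n$, and either a Poisson-summation argument or a Dirichlet approximation of a component $\xi_i$ combined with Proposition \ref{prop:expsumpower} or \ref{prop:expsumlog} yields rapid decay.

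To pass from the pointwise estimates to the $\ell^p$ bound, I would first establish the $p=2$ case via Plancherel. The supremum over $\tilde\lambda$ is handled by discretizing at scale $\sim 2^{-2dj-\gamma j/2}$ using the Lipschitz estimate $|\partial_{\tilde\lambda} m_{j,\alpha+\tilde\lambda}(\xi)| \lesssim 2^{2dj}$; the supremum over the $\lesssim 2^{2s}$ rationals in $\mathcal{A}_s$ is a polynomial loss in $j$. Both are absorbed into a smaller $\gamma_2>0$. Finally, interpolation with the trivial uniform bound \eqref{eqn:trivunif} lifts the estimate to general $p \in (1,\infty)$ with some $\gamma_p > 0$.

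The main obstacle I anticipate is ensuring the pointwise bound is $2^{-\gamma j}$ uniformly. Lemma \ref{lem:multapprox}'s error $q\delta$ at the largest admissible $\delta \asymp \delta_0$ is only $O(2^{s-4s \cdot 2^{s/(2M)}})=O(2^{-3s})$, not exponentially small in $j$ when $s$ is small. Closing this gap requires exploiting the smallness of the residual region's measure, $\lesssim (q\delta_0)^n$, and invoking Ionescu--Wainger theory (Proposition \ref{prop:iw}) on the associated residual multi-frequency multiplier; alternatively, one refines the Lemma by matching $\delta$ to $|\xi-\beta|$ so that the error interpolates between the regime very close to $\beta$ (where $\delta \sim 2^{-j}j^M$ yields $O(2^{-j/2})$) and the $\Phi$-decay at the boundary of the cutoff.
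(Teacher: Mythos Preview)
Your overall strategy coincides with the paper's: obtain a uniform pointwise bound $|E_{j,\lambda,M}(\xi)|\lesssim 2^{-\delta j}$, combine it with the derivative bound $|\partial_\lambda E_{j,\lambda,M}(\xi)|\lesssim 2^{2dj}$ and a Sobolev/fundamental-theorem-of-calculus argument to control the supremum in $\lambda$ at $p=2$, then interpolate with the trivial bound \eqref{eqn:trivunif} to reach all $p\in(1,\infty)$. Your reduction to a single scale $s$ and the handling of the supremum by discretization rather than Sobolev embedding are harmless variants.

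Regarding your anticipated obstacle: your second proposed fix is the correct one and is what the paper (via \cite[\S 5]{KR}) actually does; the first fix, invoking Ionescu--Wainger theory, is neither needed nor used. The pointwise bound $2^{-\delta j}$ holds uniformly because one never applies Lemma~\ref{lem:multapprox} with $\delta$ as large as $\delta_0$. Fix an intermediate scale $2^{-(1-\varepsilon)j}$. For $|\xi-\beta|\le 2^{-(1-\varepsilon)j}$ apply Lemma~\ref{lem:multapprox} with that $\delta$, so the error is $q\delta\le j^M 2^{-(1-\varepsilon)j}$; moreover $\chi_{s,M}(\xi-\beta)=1$ there, since $2^s\le j^M$ forces $4s\,2^{s/(2M)}\lesssim j^{1/2}\log j\ll (1-\varepsilon)j$, so no transition term arises in this region. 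For $|\xi-\beta|>2^{-(1-\varepsilon)j}$ (for every $\beta\in\tfrac1q\Z^n$) one abandons Lemma~\ref{lem:multapprox} and bounds the two pieces separately: $S(\alpha,\beta)\Phi_{j,\tilde\lambda}(\xi-\beta)\chi_{s,M}(\xi-\beta)$ by the rapid decay of $\Phi$ that you already noted, and $m_{j,\lambda}(\xi)$ by Dirichlet-approximating $\xi$ and invoking Proposition~\ref{prop:expsumpower} when the resulting denominator is large, or Lemma~\ref{lem:multapprox} together with \eqref{eqn:bcyclic} and the decay of $\Phi$ when it is small. This is exactly the case distinction in \cite[\S 5]{KR} to which the paper defers, so your worry about the gap dissolves once you commit to the variable-$\delta$ route.
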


This is proved by a standard Sobolev embedding-type argument, relying on Lemma \ref{lem:multapprox}, Proposition \ref{prop:expsumpower} and standard estimates for oscillatory integrals.
The proof coincides with that of
\cite[Prop. 3.2]{KR}, but we provide some details in \S \ref{sec:errorest}.\\

Fix $p\in (1,\infty)$ and choose $M=M(p,2)$ as in Proposition \ref{prop:tts1}.
To establish Theorem \ref{thm:main} it now suffices to show that
\begin{equation}\label{eqn:mainclaim}
\Big\| \sup_{\lambda\in\R} \Big|\sum_{j\ge 0} \sum_{s\ge 1\,:\,2^s\le j^M} L^s_{j,\lambda,M}(D)f\Big| \Big\|_{\ell^p(\Z^n)} \lesssim \|f\|_{\ell^p(\Z^n)}.
\end{equation}

\section{Main argument}\label{sec:main}
To show \eqref{eqn:mainclaim} we will show existence of $\gamma_p>0$ so that for {\em every} $M>0$, and every $s\ge 1$,
\begin{equation}\label{eqn:penult}
\Big\| \sup_{\lambda\in\R} \Big|\sum_{j\ge 0\,:\,2^s\le j^M} L^s_{j,\lambda,M}f \Big| \Big\|_{\ell^p(\Z^n)} \lesssim 2^{-\gamma_p s} \|f\|_{\ell^p(\Z^n)},
\end{equation}
where the implicit constant may depend on $d,n$ and $M$.
Since the value of $M>0$ is not important in this section, we 
suppress it from notation and write
\[\mathscr{L}_{s,\alpha}=\mathscr{L}_{s,\alpha,M}.\]
See \eqref{eqn:Lsmdef} for the definition of $\mathscr{L}_{s,\alpha,M}$. Define the auxiliary multiplier,
\begin{equation}
\mathscr{L}_{s}^\sharp[m](\xi) = \sum_{\beta\in \mathscr{U}_{2^s}} m(\xi-\beta)\widetilde{\chi}_s(\xi-\beta) = \Delta_{\mathscr{U}_{2^s}}[m \widetilde{\chi}_s],
\end{equation}
where $\mathscr{U}_{2^s}$ is a set as provided by Proposition \ref{prop:iw} and $\widetilde{\chi}$ is a smooth radial function with $0\le \widetilde{\chi}\le 1$
that is equal to one on $\{|\xi|\le 1/2\}$ and supported on $\{|\xi|\le 1\}$ (so that $\chi\cdot\widetilde{\chi}=\chi$), and $\widetilde{\chi}_s(\xi)=\widetilde{\chi}(2^{4s 2^{s/(2M)}}\xi)$.
Observe the crucial factorization
\begin{equation}\label{eqn:factorization}
\mathscr{L}_{s,\alpha}[m] = \mathscr{L}_{s,\alpha}[1]\cdot \mathscr{L}^\sharp_{s}[m],
\end{equation}
which follows from disjoint supports of the shifted cutoff functions, and uses the fact that $\frac1{q}\Z^n\subset \mathscr{U}_{2^s}$ for all $q\le 2^s$.
We now begin with the main sequence of lemmas that will pave the way for the proof of \eqref{eqn:penult}.
The first should be regarded as a trivial estimate, but is crucial in proving the result for all $p>1$.

\begin{lem}\label{lem:triv}
(i) Let $m:\R^n\to \C$ be a bounded function and $\alpha\in\Q$. Then
for every $y\in\Z^n$,
\begin{equation}\label{eqn:kernel}
\mathcal{F}^{-1}_{\Z^n}[\mathscr{L}_{s,\alpha}[m]] (y) = e(\alpha |y|^{2d}) \mathcal{F}^{-1}_{\R^n}[m \chi_s](y).
\end{equation}
(ii) Let $\mathcal{I}$ be a set and $(m_u)_{u\in\mathcal{I}}$ a family of bounded measurable functions on $\R^n$. Then for all $p\in [1,\infty]$ and all $s\ge 1$,
\[ \|\sup_{u\in\mathcal{I}} \sup_{\alpha\in\Q} |\mathscr{L}_{s,\alpha}[m_u](D)f|\|_{\ell^p(\Z^n)} \lesssim  \|\sup_{u\in\mathcal{I}} |\mathcal{F}^{-1}_{\R^n} [\chi_s m_u]|\|_{\ell^1(\Z^n)} \|f\|_{\ell^p(\Z^n)}. \]
\end{lem}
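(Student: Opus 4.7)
The plan for part (i) is to unfold the periodic sum defining $\mathscr{L}_{s,\alpha}[m]$ into an integral on $\R^n$ and then perform a Gauss-sum collapse. Writing $\alpha=a/q$, parametrize $\beta\in \frac{1}{q}\Z^n$ as $\beta=\beta_0+k$ with $\beta_0\in\frac{1}{q}[q]^n$ and $k\in\Z^n$. First I would check that $S(\alpha,\beta)$ is $\Z^n$-periodic in $\beta$, which is immediate from its definition since the character $e((k/1)\cdot r)$ equals $1$ for $r,k\in\Z^n$; this makes $\mathscr{L}_{s,\alpha}[m]$ well-defined as a $1$-periodic function (the supports of the $\chi_s$-translates are disjoint regardless). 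Then the inverse Fourier transform on $\Z^n$, integrated over $[0,1)^n$ and summed over the lattice translates $k$, telescopes to an integral over all of $\R^n$ (using $e(k\cdot y)=1$ for $y\in\Z^n$), producing
\[ \mathcal{F}^{-1}_{\Z^n}[\mathscr{L}_{s,\alpha}[m]](y) = \Big(\sum_{\beta_0\in \frac1q[q]^n} S(\alpha,\beta_0)\, e(\beta_0\cdot y)\Big)\, \mathcal{F}^{-1}_{\R^n}[m\chi_s](y). \]

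Next, I would plug in the definition $S(a/q,b/q) = q^{-n}\sum_{r\in[q]^n} e((a/q)|r|^{2d}+(b/q)\cdot r)$ and interchange the $r$- and $b$-sums. Orthogonality of characters on $(\Z/q\Z)^n$ gives
\[ \sum_{b\in[q]^n} e\big(\tfrac{b}{q}\cdot(r+y)\big) = q^n\,\mathbf{1}_{r\equiv -y\,(\mathrm{mod}\,q)}, \]
so only the unique residue $r\in[q]^n$ with $r\equiv -y \pmod q$ survives. The key (and essentially only non-routine) arithmetic observation is that $|r|^2\equiv |y|^2\pmod q$, obtained by squaring each coordinate, and hence $|r|^{2d}\equiv |y|^{2d}\pmod q$ by raising to the $d$-th power. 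Consequently $(a/q)(|r|^{2d}-|y|^{2d})\in\Z$, so $e((a/q)|r|^{2d})=e(\alpha|y|^{2d})$, and the prefactor collapses to $e(\alpha|y|^{2d})$. This gives \eqref{eqn:kernel}.

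For part (ii), part (i) lets me represent the operator as convolution with the kernel $K_{s,\alpha,u}(y) = e(\alpha|y|^{2d})\,\mathcal{F}^{-1}_{\R^n}[m_u\chi_s](y)$. The modulation $e(\alpha|y|^{2d})$ is a pointwise unimodular factor, so $|K_{s,\alpha,u}(y)|$ does not depend on $\alpha$ and is bounded by $g(y):=\sup_{u\in\mathcal{I}}|\mathcal{F}^{-1}_{\R^n}[m_u\chi_s](y)|$. Thus
\[ \sup_{u\in\mathcal{I}}\sup_{\alpha\in\Q}|\mathscr{L}_{s,\alpha}[m_u](D)f(x)| \le (|f|\ast g)(x), \]
and Young's convolution inequality yields the claimed $\ell^p$ bound for every $p\in[1,\infty]$. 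The main obstacle, such as it is, lies entirely in the congruence step of part (i): one must see why the particular integer-coefficient polynomial $|y|^{2d}$ is preserved modulo $q$ under the substitution $r\equiv -y\pmod q$; everything else is formal manipulation.
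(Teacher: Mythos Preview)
Your proof is correct and follows essentially the same approach as the paper: the same unfolding of the periodic sum into $\mathcal{F}^{-1}_{\R^n}[m\chi_s](y)$ times the finite sum $\sum_{\beta_0} S(\alpha,\beta_0)e(\beta_0\cdot y)$, followed by orthogonality on $(\Z/q\Z)^n$ to collapse the Gauss sum, and then the same pointwise domination plus Young's inequality for part (ii). The only cosmetic difference is that the paper performs the change of variables $r\mapsto r-y$ before evaluating the character sum (so that the surviving term is $r=0$ and one reads off $e(\tfrac{a}{q}|{-y}|^{2d})$ directly), whereas you keep $r\equiv -y\pmod q$ and invoke the congruence $|r|^{2d}\equiv |y|^{2d}\pmod q$; these are equivalent.
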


\begin{proof}
(i)
Writing $\alpha=a/q$ with $(a,q)=1$ we compute the kernel
\[ \mathcal{F}_{\Z^n}^{-1} [\mathscr{L}_{s,\alpha}] (y) = \sum_{\beta\in \frac1q \Z^n\cap [0,1)^n} S(\alpha,\beta) \int_{\R^n} e(y\cdot \xi) m(\xi-\beta) \chi_s(\xi-\beta) d\xi \]
\[ = \Big[\sum_{\beta\in \frac1q [q]^n} S(\alpha,\beta) e(y\cdot \beta)\Big] \mathcal{F}_{\R^n}^{-1}[m \chi_s](y).\quad(y\in\Z^n) \]
Write $\beta=\frac{b}{q}$ with $b\in [q]^n$. Then,
\[
\sum_{\beta \in \frac1{q} [q]^n} S(\alpha,\beta) e(y\cdot \beta) = \frac1{q^n} \sum_{r\in[q]^n} e(\tfrac{a}q |r|^{2d}) \Big( \sum_{b\in [q]^n} e(\tfrac{b}{q}\cdot (y+r)) \Big)
\]
Changing variables $r\mapsto r-y$ and using periodicity, the previous equals
\[ = \frac1{q^n} \sum_{r\in [q]^n} e(\tfrac{a}{q}|r-y|^{2d}) q^n \mathbf{1}_{r=0} = e(\tfrac{a}{q} |y|^{2d}), \]
which shows \eqref{eqn:kernel}.

(ii) Part (i) implies the pointwise estimate
\[ \sup_{u\in\mathcal{I}} \sup_{\alpha\in\Q} |\mathscr{L}_{s,\alpha}[m_u](D)f(x)|  \le (|f|*(\sup_{u\in\mathcal{I}} |\mathcal{F}_{\R^n}^{-1}[\chi_s m_u]|)(x), \]
which in turn implies the claim. 
\end{proof}

\begin{lem}\label{prop:majorarc1}
Let $\mathcal{I}$ be a countable set.
For every $p\in (1,\infty)$ there exists $\gamma_p>0$ such that for all $s\ge 1$,
\[ \Big\| \Big(\sum_{\nu\in\mathcal{I}} \sup_{\alpha\in\mathcal{A}_s} |\mathscr{L}_{s,\alpha}[1](D) f_\nu|^2\Big)^{1/2}\Big\|_{\ell^p(\Z^n)} \lesssim 
2^{-\gamma_p s} \Big\| \Big(\sum_{\nu\in\mathcal{I}} |f_\nu|^2\Big)^{1/2}\Big\|_{\ell^p(\Z^n)} \]
\end{lem}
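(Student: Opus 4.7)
The plan is to split the proof into two estimates combined by interpolation. For every $p\in(1,\infty)$ I would establish a trivial $\ell^p$ bound with no $s$-decay, and at $p=2$ a geometric decay $2^{-\gamma s}$; interpolating these in the $\ell^p(\Z^n;\ell^2_\nu)$-scale then yields the lemma.

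For the trivial bound, Lemma~\ref{lem:triv}(i) identifies the convolution kernel of $\mathscr{L}_{s,\alpha}[1](\D)$ as $y\mapsto e(\alpha|y|^{2d})\phi_s(y)$, where $\phi_s=\mathcal{F}^{-1}_{\R^n}\chi_s$ satisfies $\|\phi_s\|_{\ell^1(\Z^n)}\lesssim 1$. Thus $|\mathscr{L}_{s,\alpha}[1](\D)f_\nu(x)|\le(|\phi_s|*|f_\nu|)(x)$ uniformly in $\alpha$, and Minkowski's inequality in $\ell^2_\nu$ followed by Young's inequality gives the conclusion of the lemma without the $2^{-\gamma_p s}$ factor for every $p\in(1,\infty)$.

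For the decay at $p=2$, I would linearize the supremum via a measurable selector $\alpha_\nu\colon\Z^n\to\mathcal{A}_s$, reducing to the scalar estimate $\|\sup_{\alpha\in\mathcal{A}_s}|\mathscr{L}_{s,\alpha}[1](\D)f|\|_{\ell^2}\lesssim 2^{-\gamma s}\|f\|_{\ell^2}$. Using the factorization \eqref{eqn:factorization} with $m=1$, I would replace $f$ by $g=\mathscr{L}_s^\sharp[1](\D)f$, which is $\ell^p$-bounded uniformly in $s$ by Proposition~\ref{prop:iw}, so that the residual operator acts on functions Fourier-localized within $2^{-4s 2^{s/(2M)}}$-neighborhoods of $\mathscr{U}_{2^s}$. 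Since this cutoff scale is much smaller than the Dirichlet separation $\gtrsim 2^{-2s}$ between distinct reduced rationals with denominator at most $2^s$, at each frequency $\xi$ at most one reduced rational $v=b'/q'$ lies within the support of any $\chi_s(\cdot-\beta)$, so the contributing $\alpha=a/q\in\mathcal{A}_s$ are exactly those with $q'\mid q$. The Weyl-type bound $|S(a/q,\beta)|\lesssim q^{-\delta}$ from Proposition~\ref{prop:expsumpower}, combined with the cyclic vanishing relation \eqref{eqn:bcyclic}, then yields the required $2^{-\gamma s}$ gain via a Plancherel expansion over the cyclic structure of $[q]$.

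The main obstacle is precisely this last step: a direct summation using Proposition~\ref{prop:expsumpower} alone bounds $\sum_\alpha|\mathscr{L}_{s,\alpha}[1](\xi)|^2$ by $\lesssim 2^{s(2-2\delta)}/q'$, with $q'$ the denominator of the closest reduced rational to $\xi$, which can be as large as $2^{s(2-2\delta)}$ and fails to decay whenever $\delta\le 1$. The factorization \eqref{eqn:factorization} and the Ionescu--Wainger projection are essential here to absorb the combinatorial factor $|\mathcal{A}_s|\sim 2^{2s}$ arising from the supremum. Given the two endpoint estimates, the vector-valued extension at $p\ne 2$ follows by interpolating the sublinear operator, which is positively homogeneous and pointwise dominated by the discrete Hardy--Littlewood maximal function, between them, so that vector-valued Marcinkiewicz interpolation together with Fefferman--Stein applies.
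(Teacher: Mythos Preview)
Your overall strategy---a trivial vector-valued $\ell^p$ bound with no decay, geometric decay at $p=2$, then interpolation---is exactly the paper's approach. The trivial bound via Lemma~\ref{lem:triv}(i) and the kernel $e(\alpha|y|^{2d})\phi_s(y)$ is correct and essentially what the paper does (the paper phrases the vector-valued passage via Khinchine and Minkowski rather than via maximal-function domination, but either works).

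The gap is in your $p=2$ argument. You correctly identify the obstacle: replacing $\sup_\alpha$ by an $\ell^2_\alpha$-sum costs a factor $|\mathcal{A}_s|^{1/2}\sim 2^s$ that the generic Weyl bound $|S(\alpha,\beta)|\lesssim q^{-\delta}$ cannot absorb. But your proposed fix---the factorization \eqref{eqn:factorization} together with the Ionescu--Wainger projection $\mathscr{L}_s^\sharp[1]$---does not address this. At $p=2$, Proposition~\ref{prop:iw} reduces to Plancherel and yields no gain; the factorization merely inserts a projection already implicit in the support of $\chi_s$. Ionescu--Wainger controls the $\beta$-sum at $p\neq 2$, which is orthogonal to the combinatorial loss coming from the $\alpha$-supremum. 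The paper does not attempt a self-contained $p=2$ proof here; it cites \cite[Prop.~3.3]{KR}, whose argument exploits cancellation \emph{between} distinct $\alpha,\alpha'\in\mathcal{A}_s$ (via Weyl-type estimates for the resulting difference phases) rather than a crude $\ell^2_\alpha$ bound on a single $\alpha$. Without that input or an equivalent mechanism, your sketch does not close.

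A minor point: the interpolation step is simpler than you suggest. The trivial endpoint is already vector-valued (pointwise kernel domination plus Minkowski in $\ell^2_\nu$), and the $p=2$ vector-valued bound is immediate from the scalar one by Fubini; ordinary interpolation for sublinear operators then suffices, and Fefferman--Stein is not needed.
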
 

\begin{proof}
For $p=2$ this was proved in \cite[Prop. 3.3]{KR} (the cutoff functions $\chi_s$ are localized less narrowly there, but this only makes the claim easier; the same proof works verbatim).
For $p\in [1,\infty]$ we may use Lemma \ref{lem:triv} (i) to dominate 
the left-hand side by
\[ \Big\| \Big( \sum_{\nu\in\mathcal{I}} (|f_\nu|*|\phi_s|)^2 \Big)^{1/2} \Big\|_{\ell^p(\Z^n)} \lesssim \Big\| \Big(\sum_{\nu\in\mathcal{I}} |f_\nu|^2\Big)^{1/2}\Big\|_{\ell^p(\Z^n)}, \] 
where the last estimate is standard; it follows from Khinchine's and Minkowski's inequalities and the fact that $\|\phi_s\|_{\ell^1}\approx 1$.
Interpolating with $p=2$ yields the claim.
\end{proof}

The previous lemma will allow us to prove the following
 $\ell^p$ analogues of \cite[Lemma 7.1]{KR} and
 \cite[Lemma 7.2]{KR}.

\begin{lem}\label{prop:Lemma7.1}
Let $\mathcal{I}$ be a countable set and $(m_\nu)_{\nu\in\mathcal{I}}$ a family of bounded functions.
Let $p\in (1,\infty)$ and $r$ a positive integer with $(2r)'\le p\le 2r$.
Assume that for all $(\epsilon_\nu)_{\nu\in\mathcal{I}}$ with $\epsilon_\nu\in \{\pm 1\}$,
\begin{equation}\label{eqn:prop35asm}
\Big\|
\sum_{\nu\in\mathcal{I}} \epsilon_\nu m_\nu(D) f
\Big\|_{L^{2r}(\R^n)} \le A \|f\|_{L^{2r}(\R^n)}.
\end{equation}
Then there exists $\gamma_p>0$ such that for all $s\ge 1$,
\[ \Big\| \Big(\sum_{\nu\in\mathcal{I}} \sup_{\alpha\in\mathcal{A}_s} |\mathscr{L}_{s,\alpha}[m_\nu](D) f|^2\Big)^{1/2}\Big\|_{\ell^p(\Z^n)} \lesssim 
2^{-\gamma_p s} A \|f\|_{\ell^p(\Z^n)}. \]	
\end{lem}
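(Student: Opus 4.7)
The plan is to exploit the factorization \eqref{eqn:factorization}, which isolates the $\alpha$-dependence in the scalar multiplier $\mathscr{L}_{s,\alpha}[1]$ and leaves the $\nu$-dependent factor $\mathscr{L}^\sharp_{s}[m_\nu]$ entirely $\alpha$-independent. Setting $g_\nu := \mathscr{L}^\sharp_{s}[m_\nu](D)f$, the factorization gives
\[ \sup_{\alpha\in\mathcal{A}_s}\bigl|\mathscr{L}_{s,\alpha}[m_\nu](D)f\bigr| = \sup_{\alpha\in\mathcal{A}_s}\bigl|\mathscr{L}_{s,\alpha}[1](D)\,g_\nu\bigr|, \]
so Lemma \ref{prop:majorarc1} applied to the family $(g_\nu)_{\nu\in\mathcal{I}}$ immediately yields the square function estimate
\[ \Big\|\Big(\sum_\nu \sup_{\alpha\in\mathcal{A}_s}|\mathscr{L}_{s,\alpha}[m_\nu](D)f|^2\Big)^{1/2}\Big\|_{\ell^p(\Z^n)} \lesssim 2^{-\gamma_p s}\,\Big\|\Big(\sum_\nu |g_\nu|^2\Big)^{1/2}\Big\|_{\ell^p(\Z^n)}. \]
So the problem reduces to establishing the (gain-free) square function bound
\[ \Big\|\Big(\sum_\nu |\mathscr{L}^\sharp_{s}[m_\nu](D)f|^2\Big)^{1/2}\Big\|_{\ell^p(\Z^n)} \lesssim A\,\|f\|_{\ell^p(\Z^n)}. \]

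For this I would linearize the square function via Khintchine's inequality and randomize over signs $(\epsilon_\nu)\in\{\pm 1\}^{\mathcal{I}}$. Because $\mathscr{L}^\sharp_s$ is linear in its multiplier argument,
\[ \sum_\nu \epsilon_\nu \mathscr{L}^\sharp_{s}[m_\nu](D)f = \Delta_{\mathscr{U}_{2^s}}\!\Big[\Big(\textstyle\sum_\nu \epsilon_\nu m_\nu\Big)\widetilde\chi_s\Big](D)f. \]
The hypothesis \eqref{eqn:prop35asm} states precisely that the $\R^n$-multiplier $\sum_\nu \epsilon_\nu m_\nu$ is bounded on $L^{2r}(\R^n)$ by $A$ uniformly in $\epsilon$, and multiplication by $\widetilde\chi_s$ (whose inverse Fourier transform has $L^1(\R^n)$-norm of size $O(1)$ independent of $s$) costs only a harmless constant. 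Proposition \ref{prop:iw} (Ionescu--Wainger) applied with $N=2^s$ therefore gives
\[ \Big\|\Delta_{\mathscr{U}_{2^s}}\!\Big[\Big(\textstyle\sum_\nu \epsilon_\nu m_\nu\Big)\widetilde\chi_s\Big](D)f\Big\|_{\ell^p(\Z^n)} \lesssim A\,\|f\|_{\ell^p(\Z^n)} \]
uniformly in $\epsilon$. Integrating in $\epsilon$ and invoking Khintchine in the reverse direction yields the desired square function bound and completes the proof.

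The only genuine technical point I anticipate is the matching of cutoffs: Proposition \ref{prop:iw} produces the cutoff $\eta_{2^{-N^{2\rho}}}$ in its multiplier, whereas our multiplier carries $\widetilde\chi_s$, supported in $\{|\xi|\le 2^{-4s 2^{s/(2M)}}\}$. This is reconciled by choosing $\rho=\rho(M)$ small enough (e.g.\ $\rho<1/(4M)$) so that for $s$ above a threshold $s_0(M)$ the support of $\widetilde\chi_s$ lies well inside $\{|\xi|\le\tfrac12 2^{-N^{2\rho}}\}$, where $\eta_{2^{-N^{2\rho}}}\equiv 1$; then $\widetilde\chi_s=\widetilde\chi_s\cdot \eta_{2^{-N^{2\rho}}}$ and Proposition \ref{prop:iw} applies directly. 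The finitely many remaining $s<s_0(M)$ are handled separately via the crude pointwise dominations in Lemma \ref{lem:triv}, absorbing the resulting finite loss into the implicit constant. Aside from this bookkeeping, the proof is a clean assembly of the factorization \eqref{eqn:factorization}, Lemma \ref{prop:majorarc1}, Khintchine randomization, and Ionescu--Wainger.
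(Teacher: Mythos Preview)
Your proposal is correct and follows essentially the same route as the paper: factorization \eqref{eqn:factorization} plus Lemma \ref{prop:majorarc1} to extract the $2^{-\gamma_p s}$ gain, then Khintchine randomization to linearize the square function in $\nu$, and finally Proposition \ref{prop:iw} applied to the randomized multiplier $\sum_\nu \epsilon_\nu m_\nu$ to recover the constant $A$. Your added discussion of the cutoff compatibility (choosing $\rho<1/(4M)$ so that $\widetilde\chi_s$ sits inside the region where $\eta_{2^{-N^{2\rho}}}\equiv 1$, and treating finitely many small $s$ separately) is a point the paper leaves implicit, so if anything your write-up is slightly more careful.
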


\begin{proof}
By the factorization \eqref{eqn:factorization} and Lemma \ref{prop:majorarc1}
we estimate the left-hand side by a constant times
\[ 2^{-\gamma_p s} \Big\| \Big( \sum_{\nu\in\mathcal{I}} |\mathscr{L}^\sharp_s[m_\nu](D)f|^2 \Big)^{1/2} \Big\|_{\ell^p(\Z^n)} \]
with $\gamma_p$ as in Lemma \ref{prop:majorarc1}.
By Khinchine's inequality,
the previous is
\[ \lesssim 2^{-\gamma_p s} \Big( \mathbb{E}_{\epsilon} \Big\|
\mathscr{L}^\sharp_s\Big[\sum_{\nu\in\mathcal{I}} \epsilon_\nu m_\nu\Big](D)f
\Big\|^p_{\ell^p(\Z^n)} \Big)^{1/p},\]
with $\mathbb{E}_\epsilon$ denoting expectation taken over i.i.d. random variables $\epsilon_\nu$ each with $\mathbb{P}(\epsilon_\nu=\pm 1)=\tfrac12$.
By the Ionescu-Wainger multiplier theorem (that is, Proposition \ref{prop:iw}), the previous display is dominated by
\[ \lesssim 2^{-\gamma_p s} \sup_{\epsilon} \Big\|
\sum_{\nu\in\mathcal{I}} \epsilon_\nu m_\nu(D)
\Big\|_{L^{2r}\to L^{2r}}  \|f\|_{\ell^p(\Z^n)},\]
where $r$ is a positive integer with $(2r)'\le p\le 2r$, and the supremum is over all choices of $\epsilon_\nu \in \{ \pm 1\}$. Applying the assumption \eqref{eqn:prop35asm} gives the claim.
\end{proof}

\begin{lem}\label{prop:Lemma7.2}
For $j\ge 1$ let $\mathcal{K}_j$ be a mean zero $C^1$ function supported on $\{|x|\asymp 2^j\}$
so that
\[ 2^{jn}|\mathcal{K}_j(x)| + 2^{j(n+1)}|\nabla \mathcal{K}_j(x)|\lesssim 1 \]
for all $x\in\R^n$ and $j\ge 1$. Let $\mathcal{K}^{a,b} = \sum_{a\le j<b}\mathcal{K}_j$.
Then for every $p\in (1,\infty)$ there exists $\gamma_p>0$ such that for all $s\ge 1$,
\[ \|\sup_{J\ge 1} \sup_{\alpha\in\mathcal{A}_s} |\mathscr{L}_{s,\alpha}[ \widehat{\mathcal{K}^{0,J}}](D)f| \|_{\ell^p(\Z^n)} \lesssim 2^{-\gamma_p s} \|f\|_{\ell^p(\Z^n)}. \]
\end{lem}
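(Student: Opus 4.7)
By monotone convergence, it suffices to bound the partial maximal operator $\sup_{1 \le J \le 2^{s^*}}\sup_\alpha |T_J^\alpha f|$ with a constant independent of the truncation parameter $s^* \ge 1$, where $T_J^\alpha := \mathscr{L}_{s,\alpha}[\widehat{\mathcal{K}^{0,J}}](\D)$ (with the convention $T_0^\alpha f := 0$). The plan is to combine the numerical inequality of Lemma~\ref{lem:rmineq} with Lemma~\ref{prop:Lemma7.1} at each dyadic scale, and to handle the tail $J > 2^{s^*}$ separately via Lemma~\ref{lem:triv}.

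First, applying Lemma~\ref{lem:rmineq} with $r=2$ and $j_0 = 0$ to $a_J = T_J^\alpha f(x)$ yields the pointwise bound
\[
\sup_{1 \le J \le 2^{s^*}}|T_J^\alpha f(x)| \le \sqrt{2}\sum_{l=0}^{s^*}\Big(\sum_{\kappa=0}^{2^{s^*-l}-1}|\mathscr{L}_{s,\alpha}[m_\kappa^{(l)}](\D)f(x)|^2\Big)^{1/2},
\]
where $m_\kappa^{(l)} := \widehat{\mathcal{K}^{\kappa 2^l,(\kappa+1)2^l}}$. Pushing $\sup_\alpha$ inside the sum over $l$ and through the square function, and then invoking Lemma~\ref{prop:Lemma7.1} for each fixed $l$, gives a level-$l$ bound of $\lesssim 2^{-\gamma_p s}\|f\|_{\ell^p}$. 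The required hypothesis \eqref{eqn:prop35asm} is verified by standard Calder\'on--Zygmund theory: for any signs $(\epsilon_\kappa)\subset\{\pm 1\}$, the kernel
\[ \sum_\kappa \epsilon_\kappa \mathcal{K}^{\kappa 2^l,(\kappa+1)2^l} = \sum_j \epsilon_{\lfloor j/2^l\rfloor}\mathcal{K}_j \]
is mean-zero and satisfies the standard $|x|^{-n}$ size and $|x|^{-n-1}$ gradient estimates (inherited from the individual $\mathcal{K}_j$ via their essentially disjoint annular supports), uniformly in $l$ and in the signs, so the associated convolution operator is uniformly bounded on $L^{2r}(\R^n)$ and $A \lesssim 1$. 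Summing the $s^*+1$ levels produces the preliminary estimate $(s^*+1)\cdot 2^{-\gamma_p s}\|f\|_{\ell^p}$.

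For the tail $J > 2^{s^*}$, I write $T_J^\alpha f - T_{2^{s^*}}^\alpha f = \mathscr{L}_{s,\alpha}[\widehat{\mathcal{K}^{2^{s^*},J}}](\D)f$ and exploit the two-sided bound $|\widehat{\mathcal{K}_j}(\eta)| \le \min(2^j|\eta|,(2^j|\eta|)^{-1})$, which follows from the mean-zero and $C^1$ properties of $\mathcal{K}_j$. On the narrow support of $\chi_s$ (where $|\eta| \le 2^{-4s\cdot 2^{s/(2M)}}$), this allows one to bound the tail multiplier by $\lesssim |\eta|^{-1}2^{-2^{s^*}}$, which becomes negligible provided $2^{s^*} \gtrsim s\cdot 2^{s/(2M)}$. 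A direct computation of the spatial kernel $\mathcal{F}^{-1}_{\R^n}[\chi_s\cdot \widehat{\mathcal{K}^{2^{s^*},J}}]$ via integration by parts (using the same two-sided decay of $\widehat{\mathcal{K}_j}$ and its derivatives) then combines with Lemma~\ref{lem:triv}(ii) to yield a tail bound far smaller than $2^{-\gamma_p s}\|f\|_{\ell^p}$. Choosing $s^* \asymp \lceil\log_2(s\cdot 2^{s/(2M)})\rceil \lesssim s$ simultaneously absorbs the $s^*+1$ loss from the level sum into a slightly smaller exponential $2^{-\gamma_p' s}$ and makes the tail negligible.

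The main obstacle is precisely this balancing: the truncation parameter $s^*$ must be chosen large enough for the tail multiplier to be negligible on $\mathrm{supp}(\chi_s)$ (forcing $s^* \gtrsim \log s + s/(2M)$), yet the polynomial-in-$s$ loss from summing over dyadic levels must remain dominated by the $2^{-\gamma_p s}$ factor produced by Lemma~\ref{prop:Lemma7.1}. Transferring the Fourier-side smallness of the tail multiplier into the $\ell^1(\Z^n)$-bound on the spatial kernel required by Lemma~\ref{lem:triv}(ii), rather than merely into a pointwise $L^\infty$-bound, is the most technically delicate point.
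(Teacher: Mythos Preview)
Your treatment of the short range $J\le 2^{s^*}$ via Lemma~\ref{lem:rmineq} and Lemma~\ref{prop:Lemma7.1} is exactly the paper's Case~I argument. The gap is in the tail.

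Your claimed bound $|\widehat{\mathcal{K}^{2^{s^*},J}}(\eta)|\lesssim |\eta|^{-1}2^{-2^{s^*}}$ is only valid when $2^{2^{s^*}}|\eta|\ge 1$. But $\mathrm{supp}\,\chi_s\subset\{|\eta|\le 2^{-4s\cdot 2^{s/(2M)}}\}$ includes arbitrarily small $\eta$. For $|\eta|=2^{-m}$ with $m\gg 2^{s^*}$ one has $\sup_J|\widehat{\mathcal{K}^{2^{s^*},J}}(\eta)|\approx 1$, not anything small: the partial sums $\sum_{2^{s^*}\le j<J}\widehat{\mathcal K_j}(\eta)$ climb to order $1$ near $J\approx m$ before decaying again. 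So the tail multiplier is \emph{not} small on $\mathrm{supp}\,\chi_s$, and no $L^\infty$-multiplier argument can produce decay in $s$ here. The $\ell^1$-kernel route through Lemma~\ref{lem:triv}(ii) is worse still: since $\widehat{\chi_s}$ is concentrated at spatial scale $2^{4s\cdot 2^{s/(2M)}}\ll 2^{2^{s^*}}$, the convolution $\widehat{\chi_s}*\mathcal K^{2^{s^*},J}$ is essentially $\mathcal K^{2^{s^*},J}$ itself, and $\|\sup_J|\mathcal K^{2^{s^*},J}|\|_{\ell^1}=\infty$ (each annular shell contributes $\approx 1$).

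The paper handles $J>2^{Cs}$ by a genuinely different mechanism. After subtracting $\widehat{\mathcal K}=\widehat{\mathcal K^{0,\infty}}$ (one more use of Lemma~\ref{prop:Lemma7.1}), the task becomes $\sup_J\sup_\alpha|\mathscr L_{s,\alpha}[\widehat{\mathcal K^{J,\infty}}](\D)f|$. One replaces $\mathcal K^{J,\infty}$ by $\varphi_J*\mathcal K$ for a Schwartz bump $\varphi_J$ at scale $2^J$; the difference is again handled by Lemma~\ref{prop:Lemma7.1} and Calder\'on--Zygmund theory. The decisive step is an averaging-and-shifting trick over $[Q_s]^n$, with $Q_s=\mathrm{lcm}\{1,\dots,2^s-1\}$: since every $\beta$ occurring in $\mathscr L_{s,\alpha}$ satisfies $Q_s\beta\in\Z^n$ and $Q_s\ll 2^J$, one can pull $\varphi_J*$ outside the sum over $\beta$ and dominate $\sup_J$ by the discrete Hardy--Littlewood maximal function, after which a final application of Lemma~\ref{prop:Lemma7.1} closes the argument. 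Your proposal does not contain this idea, and there is no evident way to repair the tail estimate without it.
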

We postpone the proof to the end of this section.
It closely resembles that of \cite[Lemma 7.2]{KR}.
We are now ready to show \eqref{eqn:penult}, which finishes the proof of Theorem \ref{thm:main}.

\begin{proof}[Proof of \eqref{eqn:penult}]
We fix $s\ge 1$ and begin by splitting the operator into three components, similarly as in \cite{KR}.
Define 
\[ \mathcal{J}_{\ell,\mu} = \{ j\,:\,2^s\le j^M,\;|\mu| 2^{2dj}\asymp 2^\ell \},\quad \widetilde{\Phi}_{\ell,\mu} = \sum_{j\in \mathcal{J}_{\ell,\mu}} \Phi_{j,\mu}, \]
\[ \mathcal{L}_1 = \{ \ell\in\Z\,:\,\ell\ge s\},\quad \mathcal{L}_2 = \{ \ell\in\Z\,:\,-s<\ell< s\},  \]
\[ \mathcal{L}_3 = \{ \ell\in\Z\,:\,\ell\le -s \}. \]
Then it suffices to bound each of the terms
\[ \|\sup_{\alpha\in\mathcal{A}_s} \sup_{\mu} |\sum_{\ell\in\mathcal{L}_i} \mathscr{L}_{s,\alpha}[\widetilde{\Phi}_{\ell,\mu}](D) f| \|_{\ell^p(\Z^n)} \]
for $i=1,2,3$, separately.

The high frequency component, $i=1$, is handled using 
a theorem of Stein and Wainger \cite{SW01}, which implies that for every fixed $\ell\in\mathcal{L}_1$,
\[ \| \sup_{\mu\in\R} |\widetilde{\Phi}_{\ell,\mu}(D)g|\|_{L^p(\R^n)} \lesssim 2^{-\gamma_p \ell} \|g\|_{L^p(\R^n)} \]
for some $\gamma_p>0$. 
A simple transference argument and Lemma \ref{lem:triv} (i) then give the desired exponential decay in $s$.

The low frequency component, $i=3$, can be written as
\[\|\sup_{\alpha\in\mathcal{A}_s} \sup_\mu |\mathscr{L}_{s,\alpha}[\sum_{J_-\le j\le J_{+,\mu}}\Phi_{j,\mu}](D)f|\|_{\ell^p(\Z^n)},\]
where $J_-$ is the smallest integer $j$ so that $2^s\le j^M$ and $J_{+,\mu}$ is the largest integer $j$ so that $|\mu| 2^{2dj}<2^{-s+1}$.
The oscillatory integrals $\Phi_{j,\mu}$ do not experience significant oscillation in this case, so that it is reasonable to bound the previous by
\begin{equation}\label{eqn:lowfreqpart1} 
\|\sup_{\alpha\in\mathcal{A}_s} \sup_\mu |\mathscr{L}_{s,\alpha}[\sum_{J_-\le j\le J_{+,\mu}}(\Phi_{j,\mu}-\widehat{K_j})](D)f|\|_{\ell^p(\Z^n)},
\end{equation}
plus
\[\|\sup_{\alpha\in\mathcal{A}_s} \sup_{J\ge J_-} |\mathscr{L}_{s,\alpha}[\sum_{J_-\le j\le J} \widehat{K_j}](D)f|\|_{\ell^p(\Z^n)}.\]
Lemma \ref{prop:Lemma7.2} shows that the latter term is bounded by $2^{-\gamma_p s}\|f\|_{\ell^p(\Z^n)}$.
Also note that by definition of $J_{+,\mu}$,
\[ \Big\|\sum_{J_-\le j\le J_{+,\mu}} (\mathcal{F}^{-1}_{\R^n}[\Phi_{j,\mu}] - K_j)\Big\|_{L^1(\R^n)} \lesssim \sum_{J_-\le j\le J_{+,\mu}} |\mu| 2^{2dj} \lesssim 2^{-s}. \]
In view of Lemma \ref{lem:triv} (ii) this shows that \eqref{eqn:lowfreqpart1} is dominated by a constant times $2^{-s} \|f\|_{\ell^p(\Z^n)}$.

The heart of the problem lies in the analysis of the
 intermediate frequency component, $i=2$, which exhibits stationary phase behavior.
Since $\# \mathcal{L}_2\lesssim s$, it suffices to establish exponential decay for each $\ell\in\mathcal{L}_2$ separately.
For $p=2$, it is shown in \cite[\S 7.2]{KR} (the slightly different definition of the present multipliers does not affect the argument) that for every $\ell\in\mathcal{L}_2$,
\[ \|\sup_{\alpha\in\mathcal{A}_s} \sup_{\mu} |\mathscr{L}_{s,\alpha}[\widetilde{\Phi}_{\ell,\mu}](D) f| \|_{\ell^2(\Z^n)} \lesssim 2^{-\gamma s} \|f\|_{\ell^2(\Z^n)}. \]
To establish such decay for $p\in (1,\infty)$ we interpolate with the following estimate,
which is a direct consequence of Lemma \ref{lem:triv} (ii):
for every $\ell\in \mathcal{L}_2$,
\begin{equation}\label{eqn:intermedfreqpf1}
\|\sup_{\mu} \sup_{\alpha\in\mathcal{A}_s}  |\mathscr{L}_{s,\alpha}[\widetilde{\Phi}_{\ell,\mu}](D)f| \|_{\ell^p(\Z^n)}
\end{equation}
is dominated by 
\[ \|\sup_{\mu\in\R} |\mathcal{F}^{-1}_{\R^n}[\widetilde{\Phi}_{\ell,\mu}]| \|_{\ell^1(\Z^n)} \|f\|_{\ell^p(\Z^n)}. \]
If $J_{\ell,\mu}=\{j\}$, then
\[ \mathcal{F}^{-1}_{\R^n}[\widetilde{\Phi}_{\ell,\mu}](y) = e(\mu |y|^{2d}) K_j(y), \]
so that $\|\sup_{\mu\in\R} |\mathcal{F}^{-1}_{\R^n}[\widetilde{\Phi}_{\ell,\mu}]| \|_{\ell^1(\Z^n)}\approx 1$.
Therefore, 
\eqref{eqn:intermedfreqpf1} is bounded by a constant times $\|f\|_{\ell^p(\Z^n)}$, which suffices to establish the claim.
\end{proof}

\begin{remark}
There is an alternate argument for $p\ge 2$ that does not
make use of interpolation. 
Similarly as in \cite[\S 7.2]{KR},
one estimates
the term \eqref{eqn:intermedfreqpf1}
by appropriately subtracting the zero frequency, applying the Sobolev embedding trick, using Ionescu-Wainger theory and a version of Lemma \ref{prop:majorarc1}. One is then presented with a real-variable multiplier problem. 
A variant of a theorem due to Seeger \cite{See} allows us to reduce this to the estimate (in the case $n=d=1$)
\[ \| \Big(\int_1^2 \Big| \int_{\R} e(x\xi) e(\tau \xi^2) \widehat{f}(\xi) d\xi\Big|^2 d\tau\Big)^{1/2} \|_{L^p(\R)} \lesssim \|f\|_{L^p(\R)}, \]
which holds if $p\ge 2$ (see Lee, Rogers and Seeger \cite[Prop. 5.2]{LRS}).
We also refer to \cite{GRY17}, where similar arguments were used to obtain $L^p(V^r)$ inequalities associated with real-variable singular integrals of Stein-Wainger type.
This approach does not appear adequate to prove bounds for $p\in (1,2]$.
\end{remark}	

\begin{proof}[Proof of Lemma \ref{prop:Lemma7.2}]
As in \cite{KR}, the argument splits into two cases: either $J\le 2^{Cs}$ or $J>2^{Cs}$, with $C$ a large constant (if $s$ is large enough, then $C=2$ suffices).

{\em Case I: $J\le 2^{Cs}$}. The numerical inequality 
from Lemma \ref{lem:rmineq} yields a domination of the left-hand side by
\[ \sum_{l=0}^{Cs} \Big\| \Big( \sum_{\kappa\le 2^{2s-l}} \sup_{\alpha} |\mathscr{L}_{s,\alpha}[\widehat{K^{\kappa 2^l,(\kappa+1)2^l}}](D)f|^2  \Big)^{1/2}\Big\|_{\ell^p(\Z^n)} \]
\[ +\quad \| \sup_{\alpha} |\mathscr{L}_{s,\alpha}[\widehat{\mathcal{K}_1}](D)f|\|_{\ell^p(\Z^n)} \]
Lemma \ref{prop:Lemma7.1} and standard Calder\'on-Zygmund theory
allow us to bound this by $2^{-\gamma_p s} \|f\|_{\ell^p(\Z^n)}$.

{\em Case II: $J>2^{C s}$.} 
A key fact is that if $Q_s$ denotes the least common multiple of all integers in the range $[1, 2^s)$,
then, say, $2^J>Q_s^{100n}$ if $C>0$ is large enough.
By adding and subtracting $\widehat{\mathcal{K}}=\widehat{\mathcal{K}^{0,\infty}}$
and an application of Lemma \ref{prop:Lemma7.1} it suffices to control
\[ \|\sup_{\alpha\in\mathcal{A}_s} \sup_{J>2^{Cs}} |\mathscr{L}_{s,\alpha}[\widehat{\mathcal{K}^{J,\infty}}](D)f|\|_{\ell^p(\Z^n)}. \]
Next, let $\varphi$ denote a Schwartz function on $\R^n$ that is non-negative and satisfies $\int\varphi=1$ and
$\mathrm{supp}\;\widehat{\varphi}\subset \{ |\xi|<1/2\}$. Writing $\varphi_j(x)=2^{-jn}\varphi(2^{-j}x)$, it now suffices to estimate
\begin{equation}\label{eqn:7.2pf1}
\|\sup_{\alpha\in\mathcal{A}_s} \sup_{J>2^{Cs}} |\mathscr{L}_{s,\alpha}[\widehat{\varphi_J}\widehat{\mathcal{K}}](D)f|\|_{\ell^p(\Z^n)}.
\end{equation}
This is because the term
\[ \|\sup_{\alpha\in\mathcal{A}_s} \sup_{J>2^{Cs}} |\mathscr{L}_{s,\alpha}[\widehat{\mathcal{K}^{J,\infty}}-\widehat{\varphi_J}\widehat{\mathcal{K}}](D)f|\|_{\ell^p(\Z^n)} \]
can be dominated by
\begin{equation}\label{eqn:7.2pf2}
	\Big\|\Big( \sum_{J\ge 1} \sup_{\alpha\in\mathcal{A}_s} |\mathscr{L}_{s,\alpha}[\widehat{\mathcal{K}^{J,\infty}}-\widehat{\varphi_J}\widehat{\mathcal{K}}](D)f|^2 \Big)^{1/2}\Big\|_{\ell^p(\Z^n)}.
\end{equation}
Observe that
\[ \sum_{J\ge 1} \epsilon_J (\mathcal{K}^{J,\infty}-\varphi_J*\mathcal{K}) \]
is a Calder\'on-Zygmund kernel (its Fourier transform is bounded and it satisfies H\"ormander's condition), uniformly in all choices of $\epsilon_J\in\{\pm 1\}$.
Thus, by standard Calder\'on-Zygmund theory and Lemma \ref{prop:Lemma7.1}
we can bound \eqref{eqn:7.2pf2} by $2^{-\gamma_p} \|f\|_{\ell^p(\Z^n)}$ for all $p\in (1,\infty)$.
To estimate \eqref{eqn:7.2pf1} we use the fact that $\varphi_J$ is essentially unchanged when shifted by $u\in [Q_s]^n$.
Indeed, for every $u\in [Q_s]^n$,
\[	\|\sup_{\alpha\in\mathcal{A}_s} \sup_{J>2^{Cs}} |\mathscr{L}_{s,\alpha}[(\widehat{\varphi_J}-\widehat{\varphi_J(\cdot-u)})\widehat{\mathcal{K}}](D)f|\|_{\ell^p(\Z^n)} \]
is dominated by, say, $\lesssim 2^{-10s} \|f\|_{\ell^p(\Z^n)}$. This uses
$|u|2^{-J}\le Q_s^{-100n+1}\ll 2^{-100ns}$, the triangle inequality on the summation over $\beta$ and $\ell^p$ bounds for the Hardy--Littlewood maximal function and the Calder\'on-Zygmund kernel $\mathcal{K}$ (see \cite[\S 7.5]{KR} for more details).
Averaging over $u\in [Q_s]^n$ it now suffices to bound
\[ \Big(Q_s^{-n} \sum_{u\in [Q_s]^n} \|\sup_{\alpha\in\mathcal{A}_s} \sup_{J\ge 1} |\mathscr{L}_{s,\alpha}[\widehat{\varphi_J(\cdot-u)}\widehat{\mathcal{K}}](D)f|\|_{\ell^p(\Z^n)}^p \Big)^{1/p}. \]
The $p$th power of this expression equals
\[ Q_s^{-n} \sum_{u\in [Q_s]^n} \sum_{x\in \Z^n} \sup_{\alpha\in\mathcal{A}_s} \sup_{J\ge 1} |\sum_{\beta\in \frac1q [q]^n} S(\alpha,\beta) e(x\cdot \beta) (\varphi_J*\mathcal{K}_s*M_{-\beta} f)(x-u) |^p, \]
where $\mathcal{K}_s = \mathcal{F}_{\R^n}^{-1}[\widehat{\mathcal{K}}\cdot \chi_{s,M}]$ and $M_{-\beta} f(y)=e(-\beta y) f(y)$.
Changing variables $x\mapsto x+u$ and then $u\mapsto v-x$ and crucially using that $Q_s\beta\in \Z^n$ for all $\beta\in \frac1q \Z^n$ (with $q\in [2^{s-1}, 2^s)$),
we see that the previous is equal to
\[ Q_s^{-n} \sum_{v\in [Q_s]^n} \sum_{x\in \Z^n} \sup_{\alpha\in\mathcal{A}_s} \sup_{J\ge 1} |\sum_{\beta\in \frac1q [q]^n} S(\alpha,\beta) e(v\cdot \beta) (\varphi_J*\mathcal{K}_s*M_{-\beta} f)(x) |^p, \]
which equals
\[ Q_s^{-n} \sum_{v\in [Q_s]^n} \sum_{x\in \Z^n} \sup_{\alpha\in\mathcal{A}_s} \sup_{J\ge 1} |\varphi_J * \Big[\sum_{\beta\in \frac1q [q]^n} S(\alpha,\beta) e(v\cdot \beta) (\mathcal{K}_s*M_{-\beta} f) \Big] (x) |^p. \]
By $\ell^p$ boundedness of the discrete Hardy--Littlewood maximal function the previous is bounded by
\[ \lesssim Q_s^{-n}\sum_{v\in [Q_s]^n} \sum_{x\in \Z^n} \sup_{\alpha\in\mathcal{A}_s} |\sum_{\beta\in \frac1q [q]^n} S(\alpha,\beta) e(v\cdot \beta) (\mathcal{K}_s * M_{-\beta} f)(x) |^p. \]
Changing variables back, $v\mapsto u+x$ and then $x\mapsto x-u$ we arrive at
\[ Q_s^{-n}\sum_{u\in [Q_s]^n} \sum_{x\in \Z^n} \sup_{\alpha\in\mathcal{A}_s} |\sum_{\beta\in \frac1q [q]^n} S(\alpha,\beta) e(x\cdot \beta) (\mathcal{K}_s(\cdot-u) * M_{-\beta} f)(x) |^p. \]
For every fixed $u\in \Z^n$ we have by Lemma \ref{prop:Lemma7.1},
\[ \|\sup_{\alpha\in \mathcal{A}_s} |\mathscr{L}_{s,\alpha}[\mathcal{K}(\cdot-u)](D)f|\|_{\ell^p(\Z^n)} \lesssim 2^{-\gamma_p s}\|f\|_{\ell^p(\Z^n)}, \]
with constant not depending on $u$. This concludes the proof.
\end{proof}

\section{Reduction to major arc parameters: Proof of Proposition \ref{prop:tts1}}\label{sec:TTstar}
By interpolation with \eqref{eqn:trivunif}, we observe that it suffices to prove the claim for $p=2$.
To do this, we use the argument from our previous paper, see \cite[\S 4]{KR}. Here we will only describe the necessary changes to that argument. To begin, it suffices to determine $M>0$ so that for all functions $\lambda:\Z^n\to \R\setminus X_{j,M}$ we have
\[ \Big\| \sum_{y\in\Z^n} f(y) e(\lambda(x)|x-y|^{2d})K_j(x-y)\mathbf{1}_{B_j}(y) \Big\|_{\ell_x^2(\Z^n)} \lesssim j^{-\kappa} \|f\|_{\ell^2(\Z^n)}, \]
for all large enough $j$, where $B_j=\{y\in\Z^n\,:\,|y|\le 2^j\}$.
Denoting the operator in the last display by $T_{j,\lambda}$ we see that 
\[ T_{j,\lambda} (T_{j,\lambda})^* g(x) = \sum_{y\in\Z^n} g(y) \mathcal{K}^\sharp_{j,\lambda}(x,y), \]
where
\begin{equation}\label{eqn:ttsmainkernel}
\mathcal{K}^\sharp_{j,\lambda}(x,y) = \sum_{z\in\Z^n} e(\lambda(x)|z|^{2d}-\lambda(y)|y-x+z|^{2d})
\end{equation}
\[ \times K_j(z) \overline{K_j(y-x+z)}\mathbf{1}_{B_j}(x-z).\]
The support of $\mathcal{K}^\sharp_{j,\lambda}$ is contained in $B_{j+2}\times B_{j+2}$. Let
\[ E_{j,\lambda,\kappa} = \{ (x,y)\in\Z^n\times\Z^n\,:\,|\mathcal{K}^\sharp_{j,\lambda}(x,y)|\ge c_0 2^{-2jn} j^{-2\kappa} \}, \]
with $c_0$ to be determined.
We will show that there exists $C_1>0$ large enough so that for every $\kappa>1$ there exists $c_0>0$ so that for all $\lambda:\Z^n\to \R\setminus X_{j,{C_1 \kappa}}$ and $j\ge 1$,
\begin{equation}\label{eqn:ttsmainest}
|E_{j,\lambda,\kappa}| \lesssim 2^{nj} j^{-4\kappa}.
\end{equation}
With the pointwise estimate
\[ |\mathcal{K}^\sharp_{j,\lambda}(x,y)| \lesssim 2^{-2nj} j^{-2\kappa} \mathbf{1}_{B_{j+2}\times B_{j+2}}(x,y) + 2^{-2nj} \mathbf{1}_{E_{j,\lambda,\kappa}}(x,y), \]
the relation \eqref{eqn:ttsmainest} then implies
\[ \|\mathcal{K}^\sharp_{j,\lambda}\|_{\ell^2(\Z^n\times \Z^n)} \lesssim j^{-2\kappa}, \]
as long as $\lambda(y)\not\in X_{j,{C_1 \kappa}}$ for all $y\in\Z^n$. The Cauchy-Schwarz inequality and $\ell^2$ duality then give the desired estimate.
It remains to show \eqref{eqn:ttsmainest}.
The key to this is the following statement.
\begin{lem}
The constants $c_0$ and $\delta_0$ can be chosen such that for every $(x,y)\in E_{j,\lambda,\kappa}$, and $j$ large enough there exists a reduced rational $\frac{a}q$ with $q\le 2d\cdot j^{\delta_0}$ such that 
\begin{equation}\label{eqn:ttsmainpf2} |(x_1-y_1)\lambda(y)-\tfrac{a}q|\le 2^{-j(2d-1)} j^{\delta_0}.
\end{equation}
\end{lem}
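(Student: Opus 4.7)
The plan is to view $\mathcal{K}^\sharp_{j,\lambda}(x,y)$ as (after rescaling by $2^{2jn}$) an exponential sum of the type treated in Proposition \ref{prop:expsumlog}, and then exploit the lower bound afforded by $(x,y) \in E_{j,\lambda,\kappa}$ to deduce Diophantine information on one carefully chosen coefficient of the phase. Setting $w = y - x$ and $R = 2^j$, the kernel equals
\[ \mathcal{K}^\sharp_{j,\lambda}(x,y) = \sum_{z \in \Z^n \cap \omega} e(P(z))\,\varphi(z), \]
with $P(z) = \lambda(x)|z|^{2d} - \lambda(y)|z + w|^{2d}$, $\omega = \{z \in \R^n : |x - z| \le 2^j\}$, and $\varphi(z) = K_j(z)\overline{K_j(z + w)}$. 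The set $\omega$ is convex and, since $(x,y) \in B_{j+2}\times B_{j+2}$, is contained in a ball of radius $O(R)$ about the origin; the standard bounds on $K_j$ ensure that $c\cdot 2^{2jn}\varphi$ satisfies the smoothness hypotheses of Proposition \ref{prop:expsumlog} for a suitable absolute constant $c > 0$. With this normalization, the hypothesis $(x,y)\in E_{j,\lambda,\kappa}$ translates into $|S_R| \gtrsim c_0\, R^n(\log R)^{-2\kappa}$, which for $c_0$ sufficiently small is incompatible with the conclusion of Proposition \ref{prop:expsumlog} applied with $\gamma = 2\kappa$.

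Taking the contrapositive yields $\delta = \delta(2\kappa, n, 2d) > 0$ such that, for every multiindex $\alpha_0$ with $1 \le |\alpha_0| \le 2d$ and every reduced rational $a'/q'$ satisfying $|\xi_{\alpha_0} - a'/q'| \le 1/(q')^2$, one has either $q' < (\log R)^\delta$ or $q' > R^{|\alpha_0|}(\log R)^{-\delta}$. Dirichlet's approximation theorem applied at scale $Q = R^{|\alpha_0|}(\log R)^{-\delta}$ produces such a rational with $q' \le Q$ and error $\le 1/(q'Q) \le 1/(q')^2$, so the second alternative is excluded and therefore $q' < (\log R)^\delta \lesssim j^\delta$, with error $|\xi_{\alpha_0} - a'/q'| \le j^\delta/(q' R^{|\alpha_0|})$.

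Specializing to $\alpha_0 = (2d-1, 0, \ldots, 0)$, the polynomial $|z|^{2d}$ contributes nothing to the coefficient of $z_1^{2d-1}$ (only even powers of each $z_i$ appear), while expanding $|z + w|^{2d} = (|z|^2 + 2z\cdot w + |w|^2)^d$ one checks that the only trinomial term with total $z$-degree $2d-1$ is $2d\,|z|^{2(d-1)}(z\cdot w)$, whose coefficient of $z_1^{2d-1}$ equals $2d\,w_1$. Hence $\xi_{\alpha_0} = -2d\lambda(y)w_1 = 2d\lambda(y)(x_1 - y_1)$. Dividing the approximation above by $2d$ and reducing to lowest terms gives $(a,q) = 1$ with $q \le 2d\,j^{\delta_0}$ (for $\delta_0$ slightly larger than $\delta$, absorbing an absolute constant) and $|(x_1 - y_1)\lambda(y) - a/q| \le 2^{-j(2d-1)}j^{\delta_0}$ (the degenerate case $\lambda(y)(x_1 - y_1) = 0$ being handled by $a/q = 0/1$). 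The main obstacle is aligning the normalizations carefully so that the lower bound supplied by $E_{j,\lambda,\kappa}$ genuinely contradicts Proposition \ref{prop:expsumlog} with $\gamma = 2\kappa$; once that matching is arranged, the remainder is a standard combination of the contrapositive with Dirichlet's theorem and the explicit computation of the subleading coefficient of $P$.
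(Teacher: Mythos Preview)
Your proof is correct and follows essentially the same approach as the paper: identify the coefficient of $z_1^{2d-1}$ in the phase as $2d(x_1-y_1)\lambda(y)$, apply Proposition \ref{prop:expsumlog} with $\gamma=2\kappa$ in contrapositive form, combine with Dirichlet's approximation theorem to force $q\lesssim j^{\delta}$, and divide through by $2d$. Your write-up is in fact somewhat more detailed than the paper's, making explicit the verification of the hypotheses on $\omega$ and $\varphi$, the multinomial computation of the subleading coefficient, and the degenerate case.
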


\begin{proof}
The coefficient of $z_1^{2d-1}$ in the phase of \eqref{eqn:ttsmainkernel} is equal to $2d(x_1-y_1)\lambda(y)$. 
Applying Proposition \ref{prop:expsumlog} 
(with $\gamma=2\kappa$, $R=2^j$, $\alpha_0=(2d-1,0,\dots,0)$) we obtain constants $c_0>0$ and $\delta_0>0$ so that
if 
\begin{equation}\label{eqn:ttsmainpf1} |2d(x_1-y_1)\lambda(y)-\tfrac{a}{q}|\le \tfrac1{q^2},\quad q\le 2^{j(2d-1)} j^{-\delta_0}
\end{equation}
and $q\ge j^{\delta_0}$ hold for a reduced rational $\frac{a}q$, then
$|\mathcal{K}^\sharp_{j,\lambda}(x,y)|<c_0 2^{jn} j^{-2\kappa}.$
Dirichlet's approximation theorem on the other hand implies the existence of a reduced rational $\frac{a}q$ so that \eqref{eqn:ttsmainpf1} holds. 
Since $(x,y)\in E_{j,\lambda,\kappa}$ means that $|\mathcal{K}^\sharp_{j,\lambda}(x,y)|\ge c_0 2^{jn} j^{-2\kappa},$ we must have $q\le j^{\delta_0}$. Dividing through by $2d$ gives the claim.
\end{proof}

Observe that if we replace $\delta_0$ by any larger number, the conclusion continues to hold. In particular, we may assume that $\delta_0>4\kappa$.

Fix $(x',y)\in\Z^{n-1}\times \Z^n$ and let
\[ \mathcal{E} = \{x_1\in\Z\,:\,(x_1,x',y)\in E_{j,\lambda,\kappa}\}. \]
It will now suffice to determine $C_1>0$ so that for all $\kappa>1$ and $\lambda:\Z^n\to \R\setminus X_{j,{C_1 \kappa}}$ we have
\[  |\mathcal{E}| \le 2^j j^{-\delta_0}\le 2^{j} j^{-4\kappa}. \]
We argue by contradiction and assume
\[ |\mathcal{E}| > 2^{j} j^{-\delta_0}. \]
Then we follow exactly the same argument as in \cite[(4.9) etc.]{KR}, where every occurrence of $2^{\varepsilon_0 j}$ is to be replaced by $j^{\delta_0}$ and the role of \cite[(4.6)]{KR} is taken up by \eqref{eqn:ttsmainpf2} above.
This gives (assuming that $j$ is large enough) that
\[ \lambda(y) \in X_{j,{10 \delta_0}}, \]
resulting in a contradiction if $C_1 \kappa\ge 10\delta_0$, which holds as long as $C_1$ is large enough.

\section{Error estimate: Proof of Proposition \ref{prop:error}}\label{sec:errorest}

The key estimate to verify is that
\begin{equation}\label{eqn:errorestmain} |E_{j,\lambda,M}(\xi)|\lesssim 2^{-\delta j}
\end{equation}
for some constant $\delta>0$ (independent of $M$).
Also keeping in mind that $|\partial_\lambda E_{j,\lambda,M}(\xi)|\lesssim 2^{2dj}$ and using the fundamental theorem of calculus to estimate the supremum (precisely, using \cite[Lemma 5.1]{KR}), the inequality \eqref{eqn:errorestmain} implies
\[ \|\sup_{\lambda\in X_{j,M}} |E_{j,\lambda,M}(D)f|\|_{\ell^2(\Z^n)} \lesssim j^{\frac32 M} 2^{-\frac{\delta}2 j} \|f\|_{\ell^2(\Z^n)}. \] 
Interpolating this with the trivial estimate 
\[ \|\sup_{\lambda\in X_{j,M}} |E_{j,\lambda,M}(D)f|\|_{\ell^p(\Z^n)} \lesssim  \|f\|_{\ell^p(\Z^n)}, \]
valid for all $p\in [1,\infty]$, we obtain the claim. Finally, \eqref{eqn:errorestmain} follows from the same argument as in \cite[\S 5]{KR}; the only change being that every occurrence of $2^{j\varepsilon_1}$ is replaced by $j^M$ (the term $2^{j\varepsilon_2}$ remains unchanged; the case distinction $1\le s_0\le \varepsilon_1 j$ vs. $\varepsilon_1 j\le s_0\le \varepsilon_2 j$ becomes $2^{s_0}\le j^M$ vs. $j^M\le 2^{s_0}\le 2^{\varepsilon_2 j}$).

\newcommand{\etalchar}[1]{$^{#1}$}

\end{document}